\documentclass[12pt, reqno, a4paper]{amsart}

\usepackage{ amssymb, amsmath, enumerate, amsfonts, amsthm, mathrsfs, url, bm, mathtools}

\usepackage{xcolor}  	
\usepackage{hyperref}
\hypersetup{
colorlinks,
linkcolor={cyan!80!black},
citecolor={cyan!80!black},
urlcolor={cyan!80!black}
}

\usepackage{color}

\usepackage[margin=1in]{geometry}

\RequirePackage{doi}

\usepackage{upgreek}
\usepackage[OT1]{fontenc}
\usepackage{amscd}
\usepackage{amsfonts}
\usepackage{float}
\usepackage{color}
\usepackage[
backend=biber,
style=alphabetic,
]{biblatex}
\usepackage{bookmark}

\renewbibmacro{in:}{}
\DeclareFieldFormat{title}{#1}

\DeclareFieldFormat[article]{title}{\mkbibemph{#1}}       % Articles
\DeclareFieldFormat[incollection]{title}{\mkbibemph{#1}}  % Incollection chapters
\DeclareFieldFormat[book]{title}{\mkbibemph{#1}}          % Standalone books
\DeclareFieldFormat[incollection]{booktitle}{#1}          % Plain for incollection book titles
\DeclareFieldFormat[article]{journaltitle}{#1}            % Plain for journals

\AtEveryBibitem{%
  \ifentrytype{misc}{\DeclareFieldFormat{title}{\mkbibemph{#1}}}{}}

\DeclareFieldFormat{eprint:eprint}{arXiv:\href{https://arxiv.org/abs/#1}{#1}}

\DeclareFieldFormat[inproceedings]{title}{\mkbibemph{#1}}

\DeclareFieldFormat[inproceedings]{booktitle}{#1}

\usepackage{amssymb}
\newtheorem{theorem}{Theorem}[section]
\newtheorem{lemma}{Lemma}[section]

\newtheorem{proposition}{Proposition}[section]

\theoremstyle{definition}
\newtheorem{definition}{Definition}[section]

\theoremstyle{remark}

\numberwithin{equation}{section}

\newcommand{\Mod}[1]{\ (\mathrm{mod}\ #1)}

\renewcommand{\leq}{\leqslant}
\renewcommand{\geq}{\geqslant}
\newcommand{\eps}{\varepsilon}
\newcommand{\N}{\mathbb N}
\newcommand{\R}{\mathbb R}
\newcommand{\Z}{\mathbb Z}
\newcommand{\ind}{\mathbf 1}
\newcommand{\ee}{\mathrm e}
\newcommand{\calE}{\mathcal E}

\newcommand{\Bohr}[1]{\ind_{\|\alpha #1+\beta\|\leq\delta}}

\title[Diophantine approximation by primes and Landau--Siegel zeros]
{Diophantine approximation by primes and Landau--Siegel zeros}
\author{Sun-Kai Leung}
\address{Mathematical Institute, University of Oxford, Andrew Wiles Building,
Radcliffe Observatory Quarter, Woodstock Rd, Oxford OX2 6GG, United Kingdom}
\email{sunkaileung@gmail.com}
\author{Stelios Sachpazis}
\address{Charles University, Faculty of Mathematics and Physics,
Department of Algebra, Sokolovsk\'{a} 49/83, 186 75 Praha 8, Czech Republic}
\email{stylianos.sachpazis@matfyz.cuni.cz}
\subjclass[2020]{Primary 11J20; Secondary 11N05, 11N36, 11M20}
\date{}

\begin{document}
\begin{abstract}
Let $\alpha>0$ be an irrational number of finite type and let $\beta\in\R$.
Assuming the infinitude of Siegel zeros (respectively, sufficiently strong Siegel zeros), we show that for every sufficiently small $\eps>0$, there
exist infinitely many primes $p$ such that
\[
\|\alpha p+\beta\|\leq p^{-1/3+\eps}
\qquad\text{(respectively, }\|\alpha p+\beta\|\leq p^{-1/3-1/20}\text{)}.
\]
\end{abstract}
\maketitle

\section{Introduction}

Let $\alpha>0$ be irrational and let $\beta\in\R$. A classical Diophantine approximation problem
asks for which $\gamma>0$ the inequality
\begin{equation*}
\|\alpha p+\beta\|\leq p^{-\gamma+\eps}
\end{equation*}
has infinitely many prime solutions $p$ for every $\eps>0$.
Vinogradov \cite{MR2104806} obtained $\gamma=1/5$. This was successively improved to $1/4$ by Vaughan \cite{MR472731}, to $3/10$ by Harman
\cite{MR686496}, to $4/13$ by Jia \cite{MR1247382}, to $7/22$ by Harman
\cite{MR1367078}, and to $9/28$ by Jia \cite{MR1790174}.
In the homogeneous case $\beta=0$, Heath-Brown and Jia
\cite{MR1863396} obtained $16/49$, and Matom\"aki \cite{MR2525926}
subsequently reached $1/3$.\footnote{James Maynard obtained some fixed $\gamma>1/3$ in some of his unpublished notes, and we thank him for generously sharing his results with us.}

The possible existence of Landau--Siegel zeros provides another line of attack to problems about primes. Although the existence of these exceptional zeros is considered unlikely, it often leads to intriguing implications in the study of prime numbers. For example, one can appeal to a result of Heath-Brown \cite{MR0703977} from 1983 to show that the infinitude of Siegel zeros implies the twin prime conjecture. %Later, Friedlander and Iwaniec assumed the existence of Landau-Siegel zeros and developed a conditional approach for primes in arithmetic progressions and short intervals \cite{FI03,FI04}, as well as for more general sequences in their illusory sieve \cite{FI05}. 

For convenience, we now define the \textit{strength} of Siegel zeros.

\begin{definition}[Siegel zeros of strength $k$]
Fix an integer $k\geq1$. Let $\chi_1,\chi_2,\ldots$ be primitive real
Dirichlet characters with strictly increasing conductors
$D_1<D_2<\cdots$. Suppose that $L(s,\chi_j)$ has a real zero
$\beta_j\in(0,1)$ for each $j$, and that
\[
1-\beta_j=o((\log D_j)^{-k})\qquad \text{as $j\to\infty$}.
\]
We call $(\beta_j)$ a sequence of \emph{Siegel zeros of strength $k$}.
When $k=1$, we simply call it a sequence of \emph{Siegel zeros}.
\end{definition}

To state our results, let us also recall that an irrational number $\alpha$ is said to be of \emph{finite type} if
\[
\uptau(\alpha):=\sup\{t>0:\liminf_{n\to\infty}n^t\|n\alpha\|=0\}<\infty.
\]

Our first result conditionally extends the one-third exponent of
Matom\"aki to the inhomogeneous setting for finite-type irrationals.

\begin{theorem}\label{thm:main}
Assume the existence of infinitely many Siegel zeros. Let $\alpha>0$
be an irrational number of finite type and let $\beta\in\R$.
Then, for every $\eps>0$, there exist infinitely many primes $p$ such that
\[
\|\alpha p+\beta\|\leq p^{-1/3+\eps}.
\]
\end{theorem}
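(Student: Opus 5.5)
We follow the standard route: detect $\|\alpha n+\beta\|\leq\delta$ with $\delta=N^{-1/3+\eps}$ by a smooth Fourier-analytic majorant/minorant, and count such $n\sim N$ weighted by $\Lambda(n)$. Replacing $\Lambda$ by $\lambda=1*\chi$ is the standard Siegel-zero technique, since the exceptional character forces $\chi(p)=-1$ for most primes in a range $[D^{A},D^{B}]$. Concretely, we take $\chi=\chi_j$ with conductor $D=D_j$ and $N$ a suitable power of $D$. We use the Heath-Brown / Friedlander--Iwaniec identity
\[
\Lambda(n)=\Lambda*(\lambda\mu)...
\]
in the simpler form where $\Lambda$ agrees with $(\lambda*\mu*\cdots)$ up to an error controlled by $\sum_{D^{A}<p\leq N}(1+\chi(p))/p\ll (1-\beta_j)\log N$. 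This error is $o(1)$ since $1-\beta_j=o(1/\log D_j)$. The sum over $n\sim N$ of $\Lambda(n)\ind_{\|\alpha n+\beta\|\leq\delta}$ then reduces to type-I sums $\sum_{d\leq N^{\theta}}a_d\sum_{m}\ind_{\|\alpha dm+\beta\|\leq\delta}$ together with convolutions with $\chi$. The key input is that the $\chi$-twisted sums $\sum_m\chi(m)e(h\alpha dm)$ exhibit cancellation by Pólya--Vinogradov or Burgess once $m$ has length exceeding $D^{1/4+\eps}$.

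\textbf{Step 1: main term.} Expand $\ind_{\|\cdot\|\leq\delta}$ in Fourier series, truncated at $H=\delta^{-1}N^{\eps}$. The $h=0$ term gives $2\delta\sum_{n\sim N}\Lambda(n)$, approximated via $\lambda$. For this we need the Siegel-zero evaluation of $\sum_{n\leq x}(1*\chi)(n)$, which is $L(1,\chi)x+O(D^{1/2}\cdots)$, and $L(1,\chi)$ is tiny but cancels against the identity.

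\textbf{Step 2: exponential sums.} For $1\leq |h|\leq H$, we must bound
\[
\sum_{d\leq N^{\theta}}a_d\sum_{m\sim N/d}(1*\chi)(m)\,e(h\alpha dm).
\]
Write $1*\chi(m)=\sum_{ab=m}\chi(b)$ and split by the size of $a$ and $b$. When the smooth variable $a$ is long, we sum it first, bounding by $\min(N/(dbq),\|h\alpha db\|^{-1})$. We then use the finite type of $\alpha$: the denominators $q$ of convergents satisfy $q_{k+1}\ll q_k^{\uptau+\eps}$, so we can choose $q\in[N^{1/3},N^{2/3}]$, roughly. This yields $\sum_{r\leq R}\min(X,\|r\alpha\|^{-1})\ll (R/q+1)(X+q\log q)$. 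When the character variable $b$ is long, we use completion / Pólya--Vinogradov: $\sum_{b\leq B}\chi(b)e(b\theta)\ll D^{1/2}\log D$, which is acceptable when $B\gg D^{1/2+\eps}$. Choosing $N=D^{C}$ with $C$ large makes $D$ negligible, so only Type-I information of level $N^{2/3}$ in total is needed. This matches exponent $1/3$: the type-I level $N/\delta\cdot$... gives the restriction $\delta\geq N^{-1/3+\eps}$.

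\textbf{Main obstacle.} The crux is Step 2 in the range where both $a$ and $b$ are of medium size, roughly $N^{1/3}$ to $N^{2/3}$, so that neither the geometric-series bound nor Pólya--Vinogradov applies directly. In that range we would exploit the factorization $1*\chi$ as a bilinear form with one variable smooth. We plan a dispersion / Cauchy--Schwarz in the $\chi$-variable, yielding $\sum_{b}\big|\sum_{a}e(h\alpha dab)\big|^2$. We then use the finite-type property again to control the resulting sums $\sum\min(\cdot,\|\cdot\alpha\|^{-1})$ uniformly in $h\leq H$. We expect this balancing, with exactly the level $\delta=N^{-1/3+\eps}$, to be the delicate part. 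The remaining terms, including the contribution of $\beta$ which only shifts phases by $e(h\beta)$ of modulus one, are routine.
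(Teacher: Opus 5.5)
Your outline has two genuine gaps. The first is in the reduction. The function $\lambda=\ind*\chi$ is not a substitute for $\Lambda$. Since $\chi$ mimics $\mu$ on large primes, $\lambda$ mimics $\ind*\mu$, and indeed $\sum_{z<n\le x,\,P^-(n)>z}\lambda(n)/n$ is small (Lemma~\ref{lem:roughlambda}); it is the \emph{small} object. What mimics $\Lambda=\mu*\log$ is $\lambda'=\chi*\log=\lambda*\Lambda$, via the exact identity $\Lambda(n)=\lambda'(n)-\sum_{k\ell=n,\,k>1}\lambda(k)\Lambda(\ell)$ summed over $z$-rough $n$.

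The correction term must then be shown to be $o(\delta N)$ \emph{on the Bohr set} $\{n:\|\alpha n+\beta\|\le\delta\}$, which has density about $2\delta=2N^{-1/3+\eps}$. Your estimate via $\sum(1+\chi(p))/p$ only bounds its total mass, by something like $N(1-\beta)\log N=o(N)$. That says nothing about so sparse a set, and your outline never returns to this term. In the paper this is Proposition~\ref{prop:s2}. By positivity one bounds $\Lambda(\ell)\le\log x$, and majorizes the roughness conditions by upper $\beta$-sieve weights and the Bohr indicator by $\Phi^+$. The zero frequency is then small by the rough-number bound. The nonzero frequencies become \emph{ternary} sums $\sum\chi(n_1)e(\alpha hbn_1n_2n_3)$, and their $Y^{2/3}$ term is what confines the method to the exponent $1/3$. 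Relatedly, Step~1 needs no evaluation of $\sum_{n\le x}(\ind*\chi)(n)$: one only compares each Bohr-restricted sum with $2\delta$ times the unrestricted one, and the identity returns $2\delta\psi(x)$.

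The second gap is that the ``main obstacle'' you isolate is a misdiagnosis. The dispersion argument you propose for it is only a plan, so the key estimate is left unproved. Because $\chi$ has period $D$ and $N=D^V$ with $V$ large, you can split the character variable into residue classes modulo $D$. This makes $\chi$ constant at the negligible cost of a factor $D=N^{1/V}$, so every variable is effectively smooth. Now sum first over the \emph{largest} of the $j$ variables, so that the others have product at most $Y^{1-1/j}$, and use the geometric-series bound and Lemma~\ref{lem:reciprocal}. This is Lemma~\ref{lem:hyperbola}. With sieve level $R=N^{\eps/100}$ and $N^{1/3}\le q\le N^{2/3}$, it gives $\ll D^2N^{o(1)}R^{O(1)}(N^{1-1/j}+q+N/q)\ll N^{2/3+\eps/4}=o(\delta N)$ for $j\in\{2,3\}$.

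No P\'olya--Vinogradov, Burgess or Cauchy--Schwarz is needed, and Cauchy--Schwarz would discard the very Type I structure that makes the problem tractable. Moreover, the uniform bound $\sum_{b\le B}\chi(b)e(b\theta)\ll D^{1/2}\log D$ you quote is false: for $\theta=c/D$ with $(c,D)=1$ and $D\mid B$, the sum has modulus $B/\sqrt D$.

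Finally, for a prescribed $N$ there need not be any convergent denominator in $[N^{1/3},N^{2/3}]$ when $\uptau(\alpha)>2$. As in Lemma~\ref{lem:synchronization}, you must choose $q$ first (the first convergent denominator exceeding $D^T$) and then define $N$ from $q$. The finite type of $\alpha$ is used only to guarantee $q\ll_\alpha D^{t_\alpha T}$, so that $V=\log N/\log D\asymp T$ and error terms such as $V^{16}/\xi$ still tend to $0$.
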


Under a stronger hypothesis on the Siegel zeros, we can go beyond one-third.

\begin{theorem}\label{thm:harman}
Assume the existence of infinitely many Siegel zeros of strength $7$.
Let $\alpha>0$ be an irrational number of finite type and let $\beta\in\R$.
Then, for every $0<\eta<\eta_0$, there exist infinitely many primes $p$
such that
\[
\|\alpha p+\beta\|\leq p^{-1/3-\eta},
\]
where
\[
\eta_0:=\frac{\sqrt{\ee}-1}{3(1+2\sqrt{\ee})}
=0.0503183978\ldots.
\]
\end{theorem}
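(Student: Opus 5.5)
We follow the Harman sieve framework, but replace the unconditional Type I and Type II information with stronger distributional input obtained from the Siegel zeros. Throughout, $\delta=N^{-1/3-\eta}$, and the aim is a positive lower bound for
\[
S(N)=\sum_{N<p\leq 2N}\Bohr{p}.
\]
First detect the Bohr set by a smooth periodic majorant/minorant with Fourier expansion truncated at $|h|\leq H\approx\delta^{-1}N^{\eps}$. This reduces the problem to exponential sums $\sum_{n\sim N} a_n e(h\alpha n)$. Here $a_n$ is either the indicator of primes, or the sieve weights arising from Buchstab decompositions of $S(\mathcal A,z)$ with $\mathcal A=\{n\sim N:\|\alpha n+\beta\|\leq\delta\}$ and comparison set $\mathcal B=[N,2N]$.

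Take $N$ in a suitable range relative to the conductor $D=D_j$ of an exceptional character $\chi$, with $1-\beta_j=o((\log D)^{-7})$. The Siegel zero enters through a pretentious model. Since $\lambda=1*\chi$ is very sparse on $[N,2N]$ when $N$ is a small power of $D$ (heuristically $\sum_{n\leq N}\lambda(n)/n\ll L(1,\chi)\log N$), one gets $\Lambda\approx\Lambda\cdot(1-\chi)$. A Heath-Brown type identity then shows that primes behave like the convolution $1*\chi$ twisted by $\mu$, which is essentially a type I/divisor-like object. Concretely, the plan is to prove:

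\textbf{Type I with large level.} $\sum_{m\leq M}\big|\sum_{n\sim N/m} e(h\alpha mn)\big|$, which is the classical bound for finite-type $\alpha$ via Dirichlet approximations $a/q$ with $q$ chosen near $N^{1/2}$ or so.

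\textbf{Type II from the exceptional character.} Using Friedlander--Iwaniec style arguments, bilinear sums $\sum_{m\sim M}\sum_{n}b_m c_n e(h\alpha mn)$ in which one factor is $\chi$ or $1$ are reduced to type I sums with the character. Poisson summation modulo $D$, together with the finite-type bound for $q$, then controls these in a wider range of $M$ than is available unconditionally. The strength-$7$ hypothesis is used to make the error terms $(1-\beta)(\log D)^{k}$ negligible after $k$ applications of the identity and the sieve dissections; the count $7$ should come from the number of $\log$ losses incurred.

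Next, run Harman's sieve. Write
\[
S(\mathcal A,(2N)^{1/2})=S(\mathcal A,z)-\sum_{z\leq p<(2N)^{1/2}}S(\mathcal A_p,p),
\]
and iterate Buchstab's identity. Terms whose variables fall into the asymptotic ranges are evaluated by the type I/II estimates, giving main terms matching $\delta\cdot\#\{\text{primes}\}$. The remaining terms are discarded with the correct sign. The final lower bound is then
\[
S(N)\geq(1-I(\eta))\,2\delta\,\pi(N),
\]
where $I(\eta)$ is a Buchstab integral over the discarded region.

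The constant $\eta_0=(\sqrt{e}-1)/(3(1+2\sqrt e))$ strongly suggests the following structure. The discarded region is one where $\log$-ratio integrals produce $\log(\cdot)=1/2$, equivalently $e^{1/2}$. The exponent $\eta_0$ is then the solution of $I(\eta)=1$, where $I$ is a single integral $\int \omega(\cdot)\,dt/t^2$ whose evaluation yields an equation linear in $\sqrt e$. So the plan is to choose the dissection so that only one family of two-prime products $p_1p_2$ is discarded, and then compute the threshold explicitly.

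The main obstacle is the Type II estimate in the enlarged range. It requires converting the exceptional-character structure into cancellation in $e(h\alpha mn)$ uniformly for $h\leq H$, while keeping the Siegel-zero error terms small after summing over many sieve pieces. I would first try the approach of Heath-Brown's twin primes argument: write $\mu\approx\mu*\lambda$-corrections, and bound the correction by $(1-\beta)\log^{O(1)}N$, in order to reduce everything to sums of $\chi(n)e(\theta n)$, which are handled by completing modulo $D$.
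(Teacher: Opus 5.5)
\textbf{There is a genuine gap.} Your plan breaks down at exactly the step you call the main obstacle. Harman's sieve beyond one-third needs Type II estimates for $\sum_{m\sim M}\sum_n b_mc_n e(h\alpha mn)$. The coefficients, produced by Buchstab iterations, are supported on primes or rough numbers in both variables. You give no mechanism by which a Siegel zero yields cancellation in such forms. The sums you actually propose to treat, with one factor equal to $\chi$ or $1$, are already (twisted) Type I sums.

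The direct way the exceptional zero enters is through $\Lambda=\chi*\log-\sum_{k\ell=n,\,k>1}\lambda(k)\Lambda(\ell)$, and that route stalls at one-third. The Bohr-restricted convolution term must be bounded through $\lambda*1=\chi*1*1$. The three-variable hyperbola estimate then leaves a term of size $x^{2/3}$ that has to be $\ll\delta x$ (see the proof of Proposition~\ref{prop:s2}), which forces $\delta\geq x^{-1/3+o(1)}$. This is precisely why Theorem~\ref{thm:asymp}, and hence Theorem~\ref{thm:main}, stops at exponent $1/3$. So your Buchstab-integral explanation of $\eta_0$, and your reading of the strength $7$ as a count of logarithmic losses, are not backed by any working argument.

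The missing idea is that no Type II information is used at all. The paper applies Merikoski's exceptional-character lower-bound sieve \cite[Theorem~16]{Mer24}. For each dyadic $y\in[x/\log x,x/2]$ it compares $a_n=\mathbf{1}_{n\sim y}\mathbf{1}_{(n,D)=1}\mathbf{1}_{\|\alpha n+\beta\|\leq\delta}$ with $b_n=2\delta\,\mathbf{1}_{n\sim y}\mathbf{1}_{(n,D)=1}$. That sieve needs only ordinary and $\chi$-twisted Type I comparisons of $a_n$ with $b_n$ at level $y^{\theta}$, $\theta=2/3-\gamma>1/2$, plus density, normalization and exceptional-character checks on the reference sequence. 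It returns
\[
\sum_{\substack{n\leq x\\ \|\alpha n+\beta\|\leq\delta}}\Lambda(n)\geq 2\delta\Big\{1-2\log\frac{1-\theta}{2\theta-1}-O\big(L(1,\chi)(\log x)^5\big)-o(1)\Big\}\psi(x).
\]
Since $(1-\theta)/(2\theta-1)=(1+3\gamma)/(1-6\gamma)$, the main term is positive exactly when $\gamma<\eta_0$; that is where the constant comes from.

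The Type I comparisons at level $y^{2/3-\gamma}$ use three ingredients:
\begin{enumerate}[(i)]
\item the Fourier expansion with $H\approx\delta^{-1}y^{2s}$;
\item splitting the long variable into residue classes modulo $D$ to freeze $\chi$;
\item the reciprocal-sum lemma, which requires $|\alpha-a/q|\leq q^{-2}$ with $x^{1/3+\kappa}\leq q\leq x^{1-\kappa}$ for some $\kappa>\eta$.
\end{enumerate}
No finite-type input is needed there.

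Strength $7$ is what makes the error negligible: $L(1,\chi)\ll(1-\beta)(\log D)^2$, and $(\log x)^5=V^5(\log D)^5$ with $V\to\infty$ slowly, giving $2+5=7$.

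Finally, you cannot choose $q$ near $N^{1/2}$ for a given scale. The scale has to be built from the convergent: $x=q^{2/(1+\vartheta)}$ with $\vartheta=1/3+\eta$, where $q$ is the first convergent denominator exceeding $D^{T}$. The finite-type bound $Q_{r+1}\ll Q_r^{t_\alpha}$ is exactly what keeps $V=\log x/\log D\asymp T$. That ensures both $V\to\infty$ and $L(1,\chi)(\log x)^5\to0$ (Lemma~\ref{lem:synchronization}).
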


To prove our main theorems, we first establish estimates at scales for which $\alpha$ admits a suitable rational approximation. The finite-type assumption enters only in
Section~\ref{sec:mainproofs}, where a synchronization lemma
supplies such scales for both theorems.

\medskip
\noindent\textit{Notation.}
Throughout the paper, we use the standard big $O$ and little $o$
notation, as well as Vinogradov's notation $\ll$ and $\gg,$ where the implied constants depend only on the subscripted parameters, unless specified otherwise.
For $x \in \mathbb{R},$ we set $e(x):=e^{2\pi i x}$, and let
 $\|x\|$ denote the distance from $x$ to the nearest integer. For $z \geq 1,$ we let
$P(z):=\prod_{p\leq z}p$, and for a positive integer $n,$ we let $P^-(n)$ denote its least prime factor (with the convention that $P^-(1)=\infty$). We also write $n\sim y$ for
$y<n\leq2y$. A sequence $(c_d)$ of complex numbers is called divisor-bounded if there exists an absolute constant $C>0$ such that $|c_d|\ll \tau(d)^C$ for all $d \in \N$, where $\tau$ is the divisor function. Finally, we denote the Euler--Mascheroni constant by $\gamma_*$.

\section{Landau--Siegel zeros: from primes to divisor-like functions}

Let $\chi$ be a primitive real Dirichlet character of conductor $D\geq2$,
and suppose that the Dirichlet series $L(\cdot,\chi)$ of the character has a real zero
\[
\beta_\chi=1-\frac1{\xi\log D}\quad\text{with}\quad \xi\geq10.
\]
We refer to such an individual exceptional zero as a
\emph{Landau--Siegel zero}. The associated bias towards
$\chi(p)=-1$ makes $\chi$ resemble the M\"obius function $\mu$ on large
primes. This suggests replacing $\Lambda=\mu*\log$ by the divisor-like
function $\chi*\log$, after removing small prime factors. The
rough-number estimate in Lemma~\ref{lem:roughlambda} makes this
observation quantitative. Denote
\[
\lambda:=\ind*\chi,\qquad \lambda':=\chi*\log.
\]
Since $\lambda(p^a)=1+\chi(p)+\cdots+\chi(p)^a\geq0$, the function
$\lambda$ is non-negative. Moreover,
\[
\lambda'=\chi*(\ind*\Lambda)=(\ind*\chi)*\Lambda=\lambda*\Lambda,
\]
so $\lambda'$ is non-negative as well, and
\begin{equation}\label{lL}
\Lambda(n)=\lambda'(n)-\sum_{\substack{k\ell=n\\k>1}}
\lambda(k)\Lambda(\ell).
\end{equation}
This identity is the starting point of the exceptional-character sieve (see, for instance, \cite[equation (10)]{MM23}). In particular, thanks to (\ref{lL}), the passage
from primes to divisor-like functions is exact, and any contributions arising from the convolution term will be shown to be negligible.

\section{Asymptotic estimates: at one-third}

To prove Theorem~\ref{thm:main}, we first establish an asymptotic
formula at scales for which $\alpha$ admits a suitable rational
approximation. Our starting point is \eqref{lL}.

\begin{theorem}\label{thm:asymp}
Fix $\eps\in(0,1/12]$ and let $\chi$ be a primitive quadratic character of conductor $D\geq2$
such that its Dirichlet series $L(\cdot,\chi)$ vanishes at the real number
\[
\beta_\chi=1-\frac1{\xi\log D}\quad\text{for some}\quad\xi\geq10.
\]
Set $x=D^V$ with $V\geq20000/\eps$, and $\delta=x^{-1/3+\eps}$.
Let also $\alpha>0$ be irrational and
$\beta\in\R$, and
suppose that there exist coprime positive integers $a$ and $q$ satisfying the following inequalities:
\begin{equation}\label{eq:A-diophantine-hypothesis}
\left|\alpha-\frac aq\right|\leq \frac{1}{q^2},\qquad
x^{1/3}\leq q\leq x^{2/3}.
\end{equation}
There exists a constant $c_\eps>0$, depending only on $\eps$, such that we have, uniformly in $\beta$,
\begin{equation*}
\sum_{\substack{n\leq x\\\|\alpha n+\beta\|\leq\delta}}\Lambda(n)
=2\delta\psi(x)\Bigg\{1+O_\eps\Bigg(\frac{V^{16}}{\xi}
+\exp\big(-c_\eps\sqrt{V\log\xi}\big)\Bigg)\Bigg\},
\end{equation*}
where $\psi(x)=\sum_{n\leq x}\Lambda(n)$.
\end{theorem}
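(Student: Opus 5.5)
Via the identity \eqref{lL}, the sum over primes splits into a main sum and a convolution sum:
\[
\sum_{\substack{n\le x\\ \|\alpha n+\beta\|\le\delta}}\Lambda(n)
=\sum_{\substack{n\le x\\ \|\alpha n+\beta\|\le\delta}}\lambda'(n)
-\sum_{\substack{k\ell\le x,\ k>1\\ \|\alpha k\ell+\beta\|\le\delta}}\lambda(k)\Lambda(\ell).
\]
The plan is to evaluate the first sum asymptotically and to show that the second is negligible. For the second sum we use only positivity. The Bohr condition can be majorized by a smooth Fej\'er-type kernel, so it suffices to bound the convolution sum with $\ind_{\|\cdot\|\le\delta}$ replaced by $2\delta$ plus an error. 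The unweighted quantity $\sum_{k\ell\le x,\,k>1}\lambda(k)\Lambda(\ell)$ is small relative to $\psi(x)$ because the Siegel zero forces $\sum_{k\le y}\lambda(k)/k\approx L(1,\chi)\log$-type quantities, and $L(1,\chi)\ll 1/(\xi\log D)$, up to powers of $V$. This produces the $V^{O(1)}/\xi$ term. Small prime factors have to be handled first: we split $n$ according to $P^-(n)\ge z$ with $z=D^{c}$ and use the rough-number estimate Lemma~\ref{lem:roughlambda}, with a fundamental-lemma sieve giving the $\exp(-c_\eps\sqrt{V\log\xi})$ term. The Bohr-twisted part of this bound reduces to the same Type I/II estimates as the main term.

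For the main term we write $\lambda'=\chi*\log$, i.e. $\lambda'(n)=\sum_{dm=n}\chi(d)\log m$, and detect $\|\alpha n+\beta\|\le\delta$ by Fourier expansion with a smooth weight truncated at $|h|\le H=\delta^{-1}x^{\eps/10}$. The $h=0$ term gives $2\delta\sum_{n\le x}\lambda'(n)$, which equals $2\delta\psi(x)(1+O(\cdots))$ by the same exceptional-character comparison (again \eqref{lL} and positivity). It then remains to bound
\[
\sum_{1\le|h|\le H}|\hat w(h)|\,\Bigl|\sum_{d m\le x}\chi(d)\log m\,e(h\alpha dm)\Bigr|
\]
by $o(\delta x/\xi)$, after removing the small-prime-factor parts. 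This is a divisor-type sum, so it can be split by the Dirichlet hyperbola method. For $d\le x^{1/2}$ the inner sum over $m$ is a smooth geometric sum, bounded by $\min(x/d,\|h\alpha d\|^{-1})$. For $d>x^{1/2}$ we swap roles, summing over $d$ in residue classes mod $D$ (since $\chi$ is periodic mod $D$ and $D=x^{1/V}$ is tiny): we write $\chi(d)$ via its residue $r$ mod $D$, and the sum over $d\equiv r$ becomes again a geometric sum with frequency $h\alpha D$. This costs a factor of $D$, which is harmless. Both pieces thus reduce to the classical Type I estimate
\[
\sum_{h\le H}\sum_{d\le y}\min\Bigl(\frac{x}{d},\frac1{\|h\alpha d\|}\Bigr)\ll x^{o(1)}\Bigl(\frac{Hx}{q}+Hy+q\Bigr),
\]
obtained by combining $hd$ into a single variable with divisor-bounded weight and applying \eqref{eq:A-diophantine-hypothesis}. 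With $y\le x^{1/2}$, $H\approx x^{1/3-\eps}$ and $x^{1/3}\le q\le x^{2/3}$, this gives $x^{1-\eps+o(1)}$ against the main term $\delta x=x^{2/3+\eps}$. That is fine for $Hx/q\le x^{1-\eps}$, but $Hy=x^{5/6}$ is too big.

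This is the main obstacle. The naive hyperbola cut at $x^{1/2}$ is too crude, so the balance must be arranged correctly. Counting only the $\delta$-normalized error, we need $H\cdot(\text{Type I bound})\ll \delta x^{1-\eta}$ with the sum normalized by $1/H$ from $\hat w$. Properly, $|\hat w(h)|\approx\delta$, so the requirement is $\delta\cdot x^{o(1)}(Hx/q+Hy+q)\ll \delta x^{1-\eta}$, i.e. $Hx/q,\,Hy,\,q\le x^{1-\eta}$. With $H=x^{1/3-\eps/2}$ this holds for $q\ge x^{1/3}$, $q\le x^{2/3}$ and $y\le x^{2/3}$, and the hyperbola split at $x^{1/2}$ satisfies $y\le x^{1/2}$ on both sides. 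I expect the genuinely delicate part to be the uniform treatment of the rough-number truncation (the $P^-(n)\ge z$ restriction destroys the pure bilinear structure). I would handle it with a fundamental lemma of sieve theory with level $x^{\eps/100}$, which keeps all moduli well inside the Type I range. The powers $V^{16}$ accumulate from the divisor-bounded weights $\tau(d)^C$ in Lemma~\ref{lem:roughlambda}.
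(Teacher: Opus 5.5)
Your architecture matches the paper's. You restrict \eqref{lL} to $z$-rough $n$ and control the unweighted convolution term by positivity and Lemma~\ref{lem:roughlambda}. You then replace roughness by $\beta$-sieve weights of level $R=x^{\eps/100}$ and expand the Bohr condition with $H\approx\delta^{-1}x^{O(\eps)}$. The non-zero frequencies are handled by a hyperbola split, residue classes modulo $D$, and Lemma~\ref{lem:reciprocal} after merging $h$ with the remaining product. Bounding $|\widehat\Phi(h)|$ by $\delta$ instead of the paper's $|h|^{-1}$ is harmless, since $\delta H=x^{O(\eps)}$.

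The gap is that the one estimate which actually decides the exponent $1/3$ is never carried out, and you misdescribe it. In the two-fold sum $\chi*\log$ the hyperbola term $\delta Hx^{1/2}$ has plenty of room, so the obstacle you flag there is not one. The tight term sits in the Bohr-twisted convolution sum $S_2$, the $k>1$ part of \eqref{lL} restricted to $P^-(k\ell)>z$. It does not ``reduce to the same Type I/II estimates as the main term.'' $\Lambda(\ell)$ is not a Type I coefficient, and a Type II bound treating $\Lambda(\ell)$ as a general coefficient cannot reach $\ell$ as small as $z=x^{O(1/V)}$.

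What works is to use positivity all the way: $\Lambda(\ell)\le\log x$, $\ind_{P^-(\cdot)>z}\le W^+$ in both $k$ and $\ell$, and $\ind_{\|\cdot\|\le\delta}\le\Phi^+$. Then open $\lambda(k)=\sum_{ab=k}\chi(a)$ and split the sieve divisibility $d_1\mid ab$ between $a$ and $b$ as in \eqref{eq:square-freeidentity}. This leaves three-fold sums $\sum_{n_1n_2n_3\le Y}\chi(n_1)e(\alpha hmn_1n_2n_3)$ with a fixed multiplier $m\le R^3$. These require a three-variable hyperbola split: sum over the largest variable, in residue classes modulo $D$ when it is $n_1$. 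The product of the other two is then only bounded by $Y^{2/3}$. In your normalization this gives $\delta x^{o(1)}R^{O(1)}(Hx^{2/3}+q+Hx/q)$. The term $\delta Hx^{2/3}$ lies a power below $\delta x$ exactly because $\delta\ge x^{-1/3+\eps}$. This is Lemma~\ref{lem:hyperbola} with $j=3$, and it is where the exponent comes from.

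There are also three smaller corrections. First, your $h=0$ claim $2\delta\sum_{n\le x}\lambda'(n)=2\delta\psi(x)(1+O(\cdots))$ is false before the rough restriction. By \eqref{lL} and positivity, $\sum_{n\le x}\lambda'(n)-\psi(x)\ge\lambda(4)\psi(x/4)\ge\psi(x/4)$.

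Second, $L(1,\chi)\ll(1-\beta_\chi)\log^2D=\log D/\xi$, not $1/(\xi\log D)$. So the crude route through $L(1,\chi)\log x$ only gives $V\log^2D/\xi$. The saving genuinely comes from Lemma~\ref{lem:roughlambda} on $z$-rough $k$, with the sieve-weight error handled by Lemma~\ref{lem:meanerror}.

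Third, for the lower bound in the main term the minorant must be non-negative and applied before the lower sieve weight:
$\ind_{P^-(n)>z}\Bohr{n}\ge\ind_{P^-(n)>z}\Phi^-(\alpha n+\beta)\ge W^-(n)\Phi^-(\alpha n+\beta)$.
This ordering matters because $W^-$ can be negative. A Beurling--Selberg-type minorant taking negative values would break the second inequality.
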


Throughout this section and Sections~\ref{sec:s1}--\ref{sec:delta},
the hypotheses of Theorem~\ref{thm:asymp} are in force. We use the following parameters throughout these sections:
\begin{equation*}
v:=\min\left\{\sqrt{\frac V{\log\xi}},\,2\right\},\qquad
z:=D^v,\qquad u:=\frac{\eps V}{100v},\qquad
R:=z^u=x^{\eps/100},
\end{equation*}
Henceforth, for the sake of notational convenience, we write
\begin{equation*}
\calE:=V^{16}\xi^{-1}
+\exp(-c_\eps\sqrt{V\log\xi}).
\end{equation*}

We prove Theorem \ref{thm:asymp} by restricting identity \eqref{lL} to $z$-rough integers. This pre-sieving move reduces the theorem's proof to the following three estimates.

\begin{proposition}\label{prop:s1}
With the above notation, let
\begin{equation*}
S_1:=\sum_{\substack{k\ell\leq x\\k,\ell>1\\P^-(k\ell)>z}}
\lambda(k)\Lambda(\ell).
\end{equation*}
Then $S_1\ll_\eps \psi(x)\calE$.
\end{proposition}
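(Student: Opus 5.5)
Since $\lambda\geq0$ and $\Lambda\geq0$, the sum $S_1$ has non-negative terms, so it suffices to bound it from above. No Diophantine input is needed here. We only need that $\lambda=\ind*\chi$ is small on $z$-rough integers, because $\chi(p)=-1$ for most primes $p$ in a long range above $z$. The plan is to pass to prime powers and use the standard consequence of a Landau--Siegel zero: for $D\leq y$ with $y$ not too large in terms of $D$ (here $y\leq x=D^V$),
\[
\sum_{z<p\leq x}\frac{1+\chi(p)}{p}\ll \frac{V^{c}}{\xi}
\]
for some absolute $c$. This follows from the zero-free/log-free estimates (e.g. $\sum_{p\leq y}\lambda(p)/p$ bounded via $L(1,\chi)\ll 1/(\xi\log D)\cdot\log^2 D$ type bounds, or via Lemma~\ref{lem:roughlambda}, which I take to give exactly such a rough-number estimate for $\lambda$).

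\textbf{Step 1 (reduction to $\lambda(k)$ with $k$ rough).} Since $P^-(k)>z$ and $k>1$, $\lambda(k)\neq0$ forces each prime $p\,\|\,k$ to have $\chi(p)\neq-1$. Prime powers $p^a$ with $a\geq2$ and $p>z$ contribute negligibly, at most $\sum_{p>z}p^{-2+o(1)}$-type savings. Write
\[
S_1\leq\sum_{\substack{1<k\leq x\\P^-(k)>z}}\lambda(k)\,\psi(x/k).
\]
Then use $\psi(x/k)\ll x/k$ by Chebyshev. This reduces the problem to bounding
\[
x\sum_{\substack{1<k\leq x\\P^-(k)>z}}\frac{\lambda(k)}{k}.
\]

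\textbf{Step 2 (Euler product).} Bound the last sum by
\[
\prod_{z<p\leq x}\Big(1+\frac{\lambda(p)}{p}+\frac{\lambda(p^2)}{p^2}+\cdots\Big)-1\leq\exp\Big(\sum_{z<p\leq x}\frac{1+\chi(p)}{p}+O(z^{-1})\Big)-1.
\]
If the exponent is $\ll V^{16}/\xi+z^{-1}$, this is $\ll V^{16}/\xi+z^{-1}$, provided the exponent is $O(1)$ (otherwise the claimed bound is trivial, since $\calE\gg1$). Moreover $z^{-1}=D^{-v}$ is dominated by $\exp(-c\sqrt{V\log\xi})$, because $v\log D\geq\min(\sqrt{V/\log\xi},2)\log D$, and $D\geq\xi$ may be assumed (Siegel's theorem makes $\xi\ll_\eta D^\eta$). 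I would double-check this comparison carefully. Finally, $x\asymp\psi(x)$.

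\textbf{Main obstacle.} The crux is the prime sum $\sum_{z<p\leq x}(1+\chi(p))/p\ll V^{16}/\xi$ over the long range up to $D^V$, rather than the classical range up to $D^{O(1)}$. I would derive it from Lemma~\ref{lem:roughlambda} if it covers this range. Otherwise I would use the bound $\sum_{n\leq y}\lambda(n)/n\ll L(1,\chi)\log y$ together with $L(1,\chi)\ll(1-\beta_\chi)\log^2D=\log D/\xi$, which gives
\[
\sum_{n\leq x}\frac{\lambda(n)}{n}\ll\frac{V\log^2 x}{\xi}\cdot\frac{1}{\log D}\,.
\]
I would then extract primes via $\sum_{p\leq x}\lambda(p)/p\leq\sum\lambda(n)/n$ restricted appropriately, and lose the powers of $V$ through the logarithmic factors. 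The exponent $16$ leaves ample room for such losses.
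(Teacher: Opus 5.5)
Your Step~1 is the paper's reduction, and it is slightly cleaner. Bounding the $\ell$-sum by $\psi(x/k)\ll x/k$ avoids the factor $\log x/\log z=V/v$ that the paper loses by combining $\Lambda(\ell)\leq\log x$ with a count of $z$-rough $\ell$. At that point, however, the proof is already finished, and this is how the paper concludes. Since $k>1$ and $P^-(k)>z$ force $k>z$, the sum $\sum_{1<k\leq x,\,P^-(k)>z}\lambda(k)/k$ is exactly the sum in Lemma~\ref{lem:roughlambda}. That lemma bounds it by $\ll V^3/\xi+\exp(-c\sqrt{V\log\xi})\ll\calE$, so $S_1\ll x\calE\ll\psi(x)\calE$. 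Your Euler product in Step~2 only rebuilds this same sum from a prime sum that can itself only be controlled through the lemma. Its case split (``trivial if the exponent is not $O(1)$, since $\calE\gg1$'') is also unjustified: the trivial bound is only $S_1\ll x(V/v)^2$, which is not $\ll\psi(x)\calE$ when $\calE$ is merely of size $\gg1$.

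As written, Step~2 also rests on an incorrect estimate. The bound $\sum_{z<p\leq x}(1+\chi(p))/p\ll V^{c}/\xi$ is not what Lemma~\ref{lem:roughlambda} gives, and it is not available when $v=\sqrt{V/\log\xi}<2$. In that regime the only known control of primes just above $z=D^{v}$ comes with the term $v^{-2}\xi^{-v/2}=v^{-2}\exp(-\tfrac12\sqrt{V\log\xi})$ of \cite[Lemma~2.2]{MM23}, which is not $O(V^{c}/\xi)$. This is the actual source of the exponential term in $\calE$, not the prime-power error $z^{-1}$.

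Your fallback cannot replace the lemma, for two reasons:
\begin{enumerate}[(i)]
\item The inequality $\sum_{n\leq y}\lambda(n)/n\ll L(1,\chi)\log y$ is false as stated, since the term $n=1$ alone equals $1$. Its correct form, \eqref{eq:exceptionalaxiom}, only covers $n>D^{9}$, so primes in $(z,D^{9}]$ are left untouched.
\item Even on $n>D^{9}$, the best it gives is $L(1,\chi)\log x\ll(1-\beta_\chi)(\log D)^2\cdot V\log D=V(\log D)^2/\xi$. The loss $(\log D)^2$ is not a power of $V$, so no exponent such as $16$ can absorb it. This is the same kind of loss that forces the error $L(1,\chi)(\log x)^5$ in Theorem~\ref{thm:lower} and the strength-$7$ hypothesis there.
\end{enumerate}
It is the $z$-roughness built into Lemma~\ref{lem:roughlambda} that avoids these losses. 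So apply the lemma directly to your Step~1 sum and drop Step~2.
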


\begin{proposition}\label{prop:s2}
With the above notation, let
\begin{equation*}
S_2:=\sum_{\substack{k\ell\leq x\\k,\ell>1\\P^-(k\ell)>z\\
\|\alpha k\ell+\beta\|\leq\delta}}\lambda(k)\Lambda(\ell).
\end{equation*}
Then $S_2\ll_\eps \delta\psi(x)\calE$.
\end{proposition}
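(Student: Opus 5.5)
Write $S_2=\sum_{k>1,\,P^-(k)>z}\lambda(k)\,T(k)$, where
\[
T(k):=\sum_{\substack{1<\ell\leq x/k\\ P^-(\ell)>z\\ \|\alpha k\ell+\beta\|\leq\delta}}\Lambda(\ell).
\]
The plan is to bound $S_2$ by a majorant with the shape of $S_1$ multiplied by $2\delta$, and then invoke Proposition~\ref{prop:s1}. Since $\lambda,\Lambda\geq0$, only upper bounds are needed, so we may replace $\Lambda(\ell)$ by a sieve majorant, or drop to $\Lambda\leq\log x$ on $z$-rough $\ell$ where convenient.

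\textbf{Small $k$.} Split according to the size of $k$. The main case is $k\leq K:=x^{\eps/10}$ (say). Since $\chi$ has a Siegel zero, $\lambda$ is very sparse on $z$-rough integers: $\sum_{k\leq y,\,P^-(k)>z}\lambda(k)/k$ is small, of size about $\calE$ times the expected density. That estimate is essentially what drives Proposition~\ref{prop:s1}, presumably through Lemma~\ref{lem:roughlambda}. For each fixed $k\leq K$, the number $k\alpha$ has the rational approximation $ka/q$, with denominator $q/(k,a q)$-ish still in the range $x^{1/3-\eps/10}\ll q'\ll x^{2/3}$. We then need
\[
T(k)\ll \delta\,\frac{x}{k}\cdot\frac{\log x}{\log z}\cdot\frac{\log z}{\log x}\;\approx\;2\delta\sum_{\ell\leq x/k,\,P^-(\ell)>z}\Lambda(\ell),
\]
up to a constant. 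This is an upper-bound-sieve statement for primes, or for $z$-rough numbers, in a Bohr set. Expand $\ind_{\|\cdot\|\leq\delta}$ by a smooth Fourier majorant with frequencies $|h|\leq H=\delta^{-1}x^{\eps/10}$. The $h=0$ term gives $2\delta$ times the rough-number count. For $h\neq0$ we need a minor-arc bound for $\sum_{\ell\leq x/k}\Lambda(\ell)e(hk\alpha\ell)$, or just for the $z$-rough indicator via a Selberg/beta-sieve majorant of level $x^{\eps/2}$, which is a type~I sum. Type~I sums of level $x^{\eps/2}$ with modulus $q\in[x^{1/3},x^{2/3}]$ and $h\leq x^{1/3}$ are controlled by the standard bound
\[
\sum_{d\leq D'}\min\Big(\frac{N}{d},\frac{1}{\|hd\alpha\|}\Big)\ll\Big(\frac{N}{q}+D'+q\Big)\log.
\]
Summing over $h\leq H\approx x^{1/3-\eps+\eps/10}$ leaves a saving of a small power of $x$ because $\delta=x^{-1/3+\eps}$. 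So a constant-factor upper bound is enough here, since the sparsity of $\lambda$ provides the factor $\calE$.

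\textbf{Large $k$ and the main obstacle.} For $k>K$, swap roles and use the sparsity in $\ell$ instead. Then $\ell\leq x/K$, and we sum over the variable $k$ with weight $\lambda(k)\leq\tau(k)$. Here I would use a majorant of $\lambda$ by $\ind*\chi$ and open the convolution, $\lambda(k)=\sum_{mn=k}\chi(n)$. Bounding $\chi$ by $1$ gives a divisor-type bilinear form, and we then need a type~II Bohr-set estimate. This is the main obstacle: the divisor majorant loses the Siegel-zero saving. To handle it, I would try to show the contribution of $k>K$ is already negligible in Proposition~\ref{prop:s1} by a crude count. Rough $k$ with $P^-(k)>z$ and $k\leq x$ have at most $\log x/\log z\approx V/v$ prime factors, so $\lambda(k)\leq2^{V/v}$. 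Combining this with the $\lambda$-sparsity and a Cauchy--Schwarz bilinear bound in the Bohr set, whose saving comes from $q\in[x^{1/3},x^{2/3}]$, should give the $\delta\psi(x)\calE$ bound. The power $V^{16}$ in $\calE$ suggests that such polynomial-in-$V$ losses from divisor-type factors are exactly what the authors absorb.
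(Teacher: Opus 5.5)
\textbf{There is a genuine gap: the range $k>K$ is never actually handled.}

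Your treatment of $k\leq K=x^{\eps/10}$ is essentially sound. Once $\Lambda(\ell)\leq\log x$ is used and the roughness of $\ell$ is majorized by a sieve, the nonzero frequencies become Type~I sums, and the zero frequency picks up $\sum\lambda(k)/k$ from Lemma~\ref{lem:roughlambda}. One correction there: do not approximate $k\alpha$ by $ka/q$, since that does not satisfy $|k\alpha-a'/q'|\leq q'^{-2}$. Instead group $m=hkd$ and apply Lemma~\ref{lem:reciprocal} to $\alpha$ itself.

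For $k>K$ none of your proposed remedies works:
\begin{enumerate}[(i)]
\item There is no sparsity in $\ell$ to exploit, since $\ell$ carries $\Lambda(\ell)$.
\item The bound $\lambda(k)\leq 2^{V/v}$ is exponential in $V/v\geq\sqrt{V\log\xi}$, not polynomial, so it cannot be absorbed into $\calE$. For example, when $v=2$ one has $\xi\leq e^{V/4}$, so $2^{V/2}/\xi\geq e^{V/12}$. Lemma~\ref{lem:synchronization} only arranges $V^{16}/\xi\to0$.
\item A Cauchy--Schwarz bilinear bound cannot give the required saving uniformly. You need the $h$-th Fourier coefficient of the double sum to be about $x^{2/3+\eps}$ on average over $|h|\leq\delta^{-1}$. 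When $\ell\sim L$ is short (say $z<\ell\leq x^{\eps/10}$ and $k\asymp x/\ell$), Cauchy--Schwarz in either variable gives at best about $x/\sqrt{L}$, which is far larger.
\end{enumerate}

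\textbf{The missing idea.} No splitting and no Type~II input are needed, because $\lambda=\ind*\chi$ is itself a divisor-type convolution. The paper's argument runs as follows.
\begin{enumerate}[(i)]
\item Bound $\Lambda(\ell)\leq\log x$. Majorize \emph{both} rough conditions by the upper sieve weights $W^+(k)W^+(\ell)$ of Lemma~\ref{lem:sieveweights}, which is legitimate since $\lambda\geq0$. Majorize the Bohr indicator by $\Phi^+$.
\item The zero frequency, $2\delta\log x\sum\lambda(k)W^+(k)W^+(\ell)$, is $\ll x\delta\calE$ by Lemma~\ref{lem:meanerror} and \eqref{eq:roughpairmean}. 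This is the only place the Siegel zero enters.
\item For $h\neq0$, open $\lambda(k)=\sum_{ab=k}\chi(a)$ and both sieve weights, resolving $d_1\mid ab$ via \eqref{eq:square-freeidentity}. This gives ternary sums $\sum_{n_1n_2n_3\leq Y}\chi(n_1)e(\alpha hbn_1n_2n_3)$ with $|b|\leq R^3$ and $|b|Y=x$.
\item Lemma~\ref{lem:hyperbola} then sums geometrically over the largest of the three variables, so the other two have product at most $Y^{2/3}$. It splits that variable into classes modulo $D$ to freeze $\chi$.
\item This bounds all nonzero frequencies by $x^{2/3+\eps/4}$, for every $k$ at once. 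That beats $x\delta=x^{2/3+\eps}$ precisely because $x^{1/3}\leq q\leq x^{2/3}$.
\end{enumerate}

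So your worry that a divisor-type treatment loses the Siegel-zero saving is misplaced. That saving is needed only at $h=0$, where $\lambda$ is kept intact. The nonzero frequencies need merely a power saving, which the convolution structure of $\lambda$ supplies in every range of $k$.
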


\begin{proposition}\label{prop:delta}
With the above notation, let
\begin{equation*}
\Delta:=\sum_{\substack{n\leq x\\P^-(n)>z\\
\|\alpha n+\beta\|\leq\delta}}\lambda'(n)
-2\delta\sum_{\substack{n\leq x\\P^-(n)>z}}\lambda'(n).
\end{equation*}
Then $|\Delta|\ll_\eps \delta\psi(x)\calE$.
\end{proposition}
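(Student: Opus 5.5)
We will prove Proposition~\ref{prop:delta} by expanding the Bohr-set indicator in Fourier series and showing that the nonzero frequencies contribute negligibly. The key input is that $\lambda'$ restricted to $z$-rough integers is a type~I/type~II-friendly divisor-like function. Concretely, we would write $\lambda'=\chi*\log$ and use $\chi*\log=\lambda*\Lambda$ only where convenient.

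\textbf{Step 1: Fourier smoothing.} Following Vinogradov/Vaughan, we approximate $\ind_{\|t\|\leq\delta}$ from above and below by trigonometric polynomials. The upper and lower majorants have zeroth coefficient $2\delta+O(\delta/L)$, with $L=x^{\eps/10}$ say, and coefficients $|c_h|\ll\delta$ for $0<|h|\leq H:=L\delta^{-1}$, plus a tail that is harmless. This reduces $\Delta$ to showing
\begin{equation*}
\sum_{0<|h|\leq H}\Big|\sum_{\substack{n\leq x\\P^-(n)>z}}\lambda'(n)e(h\alpha n)\Big|\ll_\eps \psi(x)\calE .
\end{equation*}
Upper and lower majorants are both needed since $\lambda'\geq0$. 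The main term $2\delta\sum\lambda'(n)\ind_{P^-(n)>z}$ appears with an error $O(\delta\psi(x)/L)$, which requires the crude bound $\sum_{n\leq x,P^-(n)>z}\lambda'(n)\ll\psi(x)V$ (say). This bound follows from $L(1,\chi)$ being tiny only mildly, or simply from a sieve upper bound. Since $H\approx x^{1/3-\eps+\eps/10}$ and $q\in[x^{1/3},x^{2/3}]$, the frequencies $h\alpha$ stay well-spaced modulo $1$ at scale $1/q$.

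\textbf{Step 2: Removing roughness and decomposition.} We write $\ind_{P^-(n)>z}=\sum_{d\mid (n,P(z))}\mu(d)$ and truncate with a fundamental-lemma sieve of level $R=x^{\eps/100}$. This gives weights $\rho_d$ supported on $d\leq R$, $d\mid P(z)$, with error controlled by $\exp(-c u)$-type terms. Since $u\asymp\eps\sqrt{V\log\xi}$ when $v<2$, this produces the $\exp(-c_\eps\sqrt{V\log\xi})$ piece of $\calE$. Then $\lambda'(dm)$ expands via $\chi*\log$ into sums $\sum_{d}\rho_d\sum_{ab=dm}\chi(a)\log b$. The result is a type~I sum: a smooth variable $b$ with $\log b$ weight (removable by partial summation) of length at least $x/(\text{short stuff})$, against divisor-bounded coefficients. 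Where $a$ is long, we use instead $\chi(a)$ periodic mod $D$ with $D=x^{1/V}$ tiny. We split $a$ into residue classes mod $D$, making $a$ effectively a smooth variable too. Both ranges become type~I sums $\sum_{m\leq M}c_m\sum_{r\leq x/m}e(h\alpha m r)$ with $M$ essentially up to $x$ divided by a long smooth variable, i.e.\ a Dirichlet-hyperbola split at $\sqrt{x}$. The cost of the $D$-periodicity is a factor $D=x^{1/V}$, which is acceptable since $V\geq 20000/\eps$.

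\textbf{Step 3: Type I estimate and the main obstacle.} We apply the standard bound
\begin{equation*}
\sum_{h\leq H}\sum_{m\leq M}\min\Big(\frac{x}{m},\frac1{\|h\alpha m\|}\Big)\ll (HM+q+x/q+ xH/q)x^{o(1)},
\end{equation*}
obtained by grouping $hm$ with divisor-bounded multiplicity. With $q\in[x^{1/3},x^{2/3}]$, $H\leq x^{1/3-0.9\eps}$ and $M\leq x^{1/2+\eps/50}$, all terms are $\ll x^{1-c\eps}$, except $HM$, which could be $x^{5/6}$. That is fine, but only if the hyperbola split keeps both variables at most about $x^{2/3-\eps}$. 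Here lies the main obstacle: $\chi*\log$ is a genuine two-variable convolution, and the split must ensure the smooth variable always has length $\geq x^{1/3+\eps}$. The hyperbola method gives smooth lengths $\geq\sqrt{x}$, at the cost of the $D$-periodic reduction on the $\chi$ side, so this works. Care is needed that the sieve weights $d\leq R$ and the residue classes mod $D$ only multiply $M$ by $x^{\eps/50}$.

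Collecting the pieces gives $x^{1-c\eps}\ll\psi(x)V^{16}/\xi$ for $x$ large. The $V^{16}/\xi$ loss is dominated by the sieve-truncation and main-term comparisons rather than by the exponential sums. This yields $|\Delta|\ll_\eps\delta\psi(x)\calE$.
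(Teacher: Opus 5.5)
\textbf{Genuine gap: you insert the sieve truncation after the Fourier expansion, and that order cannot work.} Your handling of the non-zero frequencies matches what the paper does in Lemma~\ref{lem:hyperbola}: the hyperbola split at $\sqrt x$, making $\chi$ constant on residue classes modulo $D$, and then the reciprocal-fractional-parts bound. The problem is the roughness condition. In Step~1 you Fourier-expand with the sharp indicator $\ind_{P^-(n)>z}$ still inside $S_h=\sum_{n\le x,\,P^-(n)>z}\lambda'(n)e(h\alpha n)$. Only in Step~2 do you replace it by level-$R$ fundamental-lemma weights $W$, ``with error controlled by $\exp(-cu)$-type terms''.

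That error is $\sum_{n\le x}\lambda'(n)|W(n)-\ind_{P^-(n)>z}|\ll x(V/v)^{16}e^{-u/15}$ (Lemma~\ref{lem:meanerror}). It does not oscillate in $h$, so you pay it again at every frequency $0<|h|\le H$.
\begin{itemize}
\item With your weights $|c_h|\ll\delta$, this costs about $\delta H\cdot xe^{-u/15}=x^{1+\eps/10}e^{-u/15}$.
\item Even with the sharper weights $\min\{\delta,|h|^{-1}\}$, it costs at least $xe^{-u/15}$.
\item The target is $\delta x\calE$ with $\delta=x^{-1/3+\eps}$.
\end{itemize}
Since $e^{-u}=R^{-1/\log z}=x^{-\eps/(100\log z)}$ and $z=D^v\to\infty$, the fundamental-lemma saving is not a power of $x$. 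It therefore cannot pay for the factor $\delta^{-1}$. Using the exact expansion $\sum_{d\mid(n,P(z))}\mu(d)$ instead does not help either, since such $d$ lie far outside any Type~I range.

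\textbf{The missing idea: insert the sieve weights pointwise, before the Fourier expansion.} Then the sieve error only ever meets the zeroth coefficient.
\begin{itemize}
\item Because $\lambda'\ge0$, the paper uses the upper bound $\ind_{P^-(n)>z}\Bohr{n}\le W^+(n)\Phi^+(\alpha n+\beta)$.
\item For the lower bound it first minorizes the Bohr indicator by $\Phi^-\ge0$, and only then minorizes $\ind_{P^-(n)>z}$ by $W^-$. This gives $\ind_{P^-(n)>z}\Bohr{n}\ge W^-(n)\Phi^-(\alpha n+\beta)$. The order matters because $W^-$ may be negative.
\item Expanding $\Phi^\pm$ now produces $\widehat\Phi^\pm(0)B_0^\pm$ with $B_0^\pm=M_0+O_\eps(x\calE)$. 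So the sieve error is multiplied by $2\delta$, which is affordable.
\item The non-zero frequencies involve $B_h^\pm=\sum_{n\le x}\lambda'(n)W^\pm(n)e(\alpha hn)$, which already carry weights supported on $d\le R$.
\end{itemize}

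You also pass over one further step: decoupling the condition $d\mid ab$ in $\lambda'(dm)=\sum_{ab=dm}\chi(a)\log b$, which the paper does via \eqref{eq:square-freeidentity}. After that, the $B_h^\pm$ are genuine Type~I sums, and your Step~3 bounds finish the proof.
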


\begin{proof}[Proof of Theorem~\ref{thm:asymp}]
The contribution of the prime powers whose prime base is at most $z$ is readily bounded by
\begin{equation*}
\sum_{\substack{n\leq x\\P^-(n)\leq z}}\Lambda(n)
\leq\sum_{p\leq z}\log p\left\lfloor\frac{\log x}{\log p}\right\rfloor
\ll\frac{z\log x}{\log z}.
\end{equation*}
Summing \eqref{lL} over $z$-rough integers, first with the Bohr condition
and then without it, gives
\[
\sum_{\substack{n\leq x\\\|\alpha n+\beta\|\leq\delta}}\Lambda(n)
-2\delta\psi(x)
=\Delta-S_2+2\delta S_1+O\left(\frac{z\log x}{\log z}\right).
\]
Since $\eps\in(0,1/12]$, we have $V>6$, so
$z\leq D^2<x^{1/3}<x^{2/3}\delta\ll x\delta/\xi$.
Here $\xi\ll D^2$ follows from Siegel's lower bound for $L(1,\chi)$
(see \cite[Corollary 11.15]{montgomery2007multiplicative}).
Thus the error above is $\ll x\delta V/(v\xi)\ll x\delta\calE$,
and the theorem follows from the three propositions.
\end{proof}

\section{Lower bounds: beyond one-third}

To prove Theorem \ref{thm:harman}, we first establish a lower bound at scales for which $\alpha$ admits a suitable rational approximation using the general
exceptional-character sieve of Merikoski \cite[Theorem 16]{Mer24}.

\begin{theorem}\label{thm:lower}
Let $0<\eta<\gamma<\eta_0$, where $\eta_0$ is the constant defined in Theorem \ref{thm:harman}. Let also $\alpha>0$ be irrational and $\beta\in\R$, and suppose that there exist a real number $\kappa\in(\eta,1/6)$ and coprime positive integers $a$ and $q$ such that
\begin{equation}\label{eq:lowerq}
\left|\alpha-\frac aq\right|\leq \frac{1}{q^2}\quad\text{and}\quad
x^{1/3+\kappa}\leq q\leq x^{1-\kappa}.
\end{equation}
Assume further that $\chi$ is a primitive real Dirichlet character of conductor $D\geq 2$, and let $V=V(D)>1$ be a parameter depending on $D$ such that
\[
V\to\infty\quad\text{and}\quad\log V=o(\log D)\quad\text{as }D\to\infty.
\]
If $x=D^V$ and $\delta=x^{-1/3-\eta}$, then, uniformly in $\beta$, we have that
\begin{equation*}
\sum_{\substack{n\leq x\\\|\alpha n+\beta\|\leq\delta}}\Lambda(n)
\geq2\delta\left\{c(\gamma)-O(L(1,\chi)(\log x)^5)-o(1)\right\}\psi(x),
\end{equation*}
where $\psi(x)=\sum_{n\leq x}\Lambda(n)$ and
\begin{equation*}
c(\gamma):=1-2\log\left(\frac{1+3\gamma}{1-6\gamma}\right).
\end{equation*}
The implied constants may depend on $\eta,\gamma$ and $\kappa$.
\end{theorem}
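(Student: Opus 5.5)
We plan to deduce Theorem~\ref{thm:lower} from Merikoski's general exceptional-character sieve \cite[Theorem 16]{Mer24}. Applied to the weights $a_n:=\ind_{\|\alpha n+\beta\|\leq\delta}$ restricted to $n\leq x$, that theorem gives a lower bound for $\sum_{n\leq x}a_n\Lambda(n)$ from two pieces of input. The first is a Type~I (level of distribution) estimate: for divisor-bounded $(c_d)$ supported on $d\leq x^{\theta_1}$,
\[
\sum_{d\leq x^{\theta_1}}c_d\Bigl(\sum_{\substack{n\leq x\\ d\mid n}}a_n-2\delta\frac{x}{d}\Bigr)\ll \delta x(\log x)^{-A}.
\]
The second is a Type~I/II estimate for convolutions $\sum_{mn\leq x}\alpha_m\beta_n a_{mn}$ with $m$ in a range $[x^{\theta_2},x^{\theta_3}]$. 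The conclusion then involves the loss $2\log\bigl(\tfrac{1+3\gamma}{1-6\gamma}\bigr)$, coming from the part of the sieve that cannot be handled. The cost of the exceptional character appears as the $O(L(1,\chi)(\log x)^5)$ term, and the hypotheses $V\to\infty$ and $\log V=o(\log D)$ are exactly those of Merikoski's setup. Accordingly, the first step is to state Merikoski's theorem in the form needed and to identify the exponents $\theta_i$ as functions of $\gamma$. The target is $\theta_1=2/3-\gamma$ or similar, so that $1/3+\gamma$ and $1-6\gamma$ arise as the critical ranges.

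Next we verify the arithmetic input using Fourier analysis. We replace the Bohr indicator by smooth minorant and majorant functions whose Fourier coefficients are supported on $|h|\leq H=x^{1/3+\eta+o(1)}$, with the zero frequency equal to $2\delta(1+o(1))$. The nonzero frequencies then lead to exponential sums.
\begin{enumerate}[(i)]
\item For Type~I we need $\sum_{1\leq|h|\leq H}|\hat w(h)|\sum_{d}|c_d|\bigl|\sum_{m\leq x/d}e(\alpha h d m)\bigr|$. We bound this by $\sum_{k\leq HD'}\tau(k)\min(x/k',\|\alpha k\|^{-1})$ in the standard way. Using $|\alpha-a/q|\leq q^{-2}$, this is $\ll (HD'+q+x/q)x^{o(1)}$ times $\delta$. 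Requiring this to be $o(\delta x)$ with $q\geq x^{1/3+\kappa}$, $q\leq x^{1-\kappa}$, and $H\approx x^{1/3+\eta}$ gives a level $D'$ up to about $x^{2/3-\eta-o(1)}$, with room since $\kappa>\eta$.
\item For Type~II we apply Cauchy--Schwarz in $m$, expand, and use the same $\min$ bound (Vaughan's bilinear estimate). This gives savings for $M$ in a range like $[x^{1/3+\eta},x^{2/3-2\eta}]$ (roughly), again using $q\in[x^{1/3+\kappa},x^{1-\kappa}]$. The required saving must exceed $(\log x)^{A}$ uniformly in $\beta$; this holds because $\beta$ only contributes phases $e(h\beta)$ of modulus one.
\end{enumerate}

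Finally, we feed these ranges into Merikoski's theorem. The main obstacle is matching the available Type~I/II ranges with the ranges that his theorem requires, so that the resulting constant is exactly $c(\gamma)$. In his framework, the unhandled region contributes a logarithmic integral of the form $\int \frac{dt}{t}$ over the bad ranges, which should evaluate to $2\log\frac{1+3\gamma}{1-6\gamma}$. The point of taking $\gamma>\eta$ is that it absorbs the $x^{o(1)}$ losses. To carry this out we would parametrize the admissible ranges by $\gamma$ and check them against the inequality $\gamma<\eta_0$, which makes $c(\gamma)>0$. Replacing the smoothed count by the sharp one costs only $o(1)$ in the main term.
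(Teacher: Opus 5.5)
Your overall frame is the paper's: apply \cite[Theorem~16]{Mer24}, smooth the Bohr indicator with $H\approx x^{1/3+\eta}$, and get Type~I information up to level $x^{2/3-\gamma}$ from Lemma~\ref{lem:reciprocal}. The gap $\gamma>\eta$ absorbs the $x^{o(1)}$ losses, and $\kappa>\eta$ controls the $xH/q$ term (the term is $xH/q$, not $x/q$).

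However, you have misidentified what Merikoski's sieve needs, and the step you add would fail. The exceptional-character sieve takes \emph{only} Type~I input, ordinary and $\chi$-twisted; that is exactly why it can pass $1/3$. Your step (ii) cannot be carried out. Apply Cauchy--Schwarz over $m\sim M$, keeping $h\leq H$ and $n\sim N$ inside. Vaughan's argument then gives
\[
\sum_{h\leq H}\Bigl|\sum_{m\sim M}\sum_{n\sim N}a_mb_ne(\alpha hmn)\Bigr|\ll x^{o(1)}(Hx)^{1/2}\Bigl(M+HN+q+\frac{Hx}{q}\Bigr)^{1/2}.
\]
This is $o(x)$ only if $M<x/H$, $N<x/H^2$ and $H^2<q<x/H$. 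For $H=x^{1/3+\eta}$ the first two conditions contradict $MN\asymp x$, and the $q$-range is empty. So there is no Type~II range at $\delta=x^{-1/3-\eta}$, and your claimed range $[x^{1/3+\eta},x^{2/3-2\eta}]$ is wrong. Likewise, $c(\gamma)$ does not come from integrating over ``bad'' Type~II ranges. It is read off from Theorem~16 with exponent of distribution $\theta=2/3-\gamma$; note that $\frac{1+3\gamma}{1-6\gamma}=\frac{1-\theta}{2\theta-1}$.

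What you omit is the input the theorem actually requires. First, you need the $\chi$-twisted Type~I estimate \eqref{eq:typeitwist}, uniform over arbitrary intervals $J_d\subseteq(y/d,2y/d]$. The paper works on dyadic blocks $y\in[x/\log x,x/2]$, with $a_n,b_n$ restricted to $(n,D)=1$. It splits $m$ into residue classes modulo $D$, so that $\chi(m)$ and $\ind_{(m,D)=1}$ are constant on each class, and then bounds geometric sums with frequency $\alpha Dhd$. The cost is a factor $D^2$, harmless because $D=x^{1/V}=x^{o(1)}$; this is what $V\to\infty$ is for, while $\log V=o(\log D)$ gives $D\gg_A(\log y)^A$.

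Second, you need the remaining hypotheses, all verified in Proposition~\ref{prop:sievehypothesis}:
\begin{enumerate}[(a)]
\item the density $g(d)=\ind_{(d,D)=1}/d$ and its Mertens-type local condition;
\item the normalization \eqref{eq:normalization};
\item the reference-sequence bounds, in short intervals and in the twisted form $\ll\delta DM(\log y)^C$;
\item the prime-power condition;
\item above all, the exceptional-character bounds \eqref{eq:exceptionalaxiom}.
\end{enumerate}
The bounds in (e) follow from $\sum_{n\leq t}\chi(n)/n=L(1,\chi)+O(D/t)$ and $\sum_{n\leq t}\lambda(n)=L(1,\chi)t+O(D\sqrt t)$, combined with $L(1,\chi)\gg D^{-1/2}$ for $t>w\geq D^9$. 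They are what produce the $O(L(1,\chi)(\log x)^5)$ term, which your proposal asserts but never justifies. Once the Type~II step is dropped and these inputs are supplied, summing the dyadic lower bounds gives the theorem.
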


In this section and Sections~\ref{sec:typei}--\ref{sec:sievehypothesis},
we use the hypotheses and notation of Theorem~\ref{thm:lower}. Now, for $y\in[x/\log x,x/2]$, define the sequences $(a_n)$ and $(b_n)$ given by
\begin{equation*}
a_n:=\ind_{y<n\leq2y}\ind_{(n,D)=1}\Bohr{n}\quad\text{and}\quad
b_n:=2\delta\ind_{y<n\leq2y}\ind_{(n,D)=1}\quad(n\in\N).
\end{equation*}
We also define the non-negative multiplicative function $g$ and the quantity $X$ by
\begin{align}
g(d)&:=\frac{\ind_{(d,D)=1}}d\quad(d\in\N),\label{eq:g}\\
X&:=\sum_n b_n
=2\delta y \cdot \frac{\varphi(D)}D+O(\delta\tau(D)).\label{eq:X}
\end{align}

To apply \cite[Theorem~16]{Mer24}, we need ordinary and
$\chi$-twisted Type I estimates, the corresponding bounds for the
reference sequence (also on short intervals), a dimension-one density,
the prime normalization, and the exceptional-character bounds.
Proposition~\ref{prop:typei} gives the two Type I comparisons; the
remaining conditions are verified in Proposition~\ref{prop:sievehypothesis}
and its proof.

\begin{proposition} \label{prop:typei}
Let $x$ be as in Theorem \ref{thm:lower} and $y\in[x/\log x,x/2]$. Let also $(c_d)$ be a divisor-bounded sequence of complex numbers, $A>0$, and $M\leq y^{2/3-\gamma}$. For arbitrary intervals $J_d\subseteq(y/d,2y/d]$, we have that
\begin{gather}
\left|\sum_{d\leq M}c_d\sum_{m\in J_d}(a_{dm}-b_{dm})\right|
\ll_AX(\log y)^{-A},\notag\\
\left|\sum_{d\leq M}c_d\sum_{m\in J_d}\chi(m)(a_{dm}-b_{dm})\right|
\ll_AX(\log y)^{-A}.\label{eq:typeitwist}
\end{gather}
Both estimates are uniform in $y$, $\beta$ and the intervals $J_d$.
The implied constants may depend on the divisor bound for $(c_d)$,
as well as on $\eta,\gamma$ and $\kappa$.
\end{proposition}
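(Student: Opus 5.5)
The plan is to detect the Bohr condition $\|\alpha n+\beta\|\leq\delta$ by a finite Fourier expansion, identify the zero frequency with the reference sequence $b_n$, and bound every non-zero frequency by a classical exponential sum estimate that uses the rational approximation \eqref{eq:lowerq}. Fix a small $\varepsilon'>0$, depending on $\eta,\gamma,\kappa$, and set $H:=\delta^{-1}x^{\varepsilon'}=x^{1/3+\eta+\varepsilon'}$. By Vaaler's (Beurling--Selberg) construction there are trigonometric polynomials
\[
\Phi^\pm(\theta)=\sum_{|h|\leq H}\widehat{\Phi}^\pm(h)e(h\theta),\qquad \Phi^-(\theta)\leq\ind_{\|\theta\|\leq\delta}\leq\Phi^+(\theta),
\]
with $\widehat{\Phi}^\pm(0)=2\delta+O(1/H)$ and $|\widehat{\Phi}^\pm(h)|\ll\min(\delta,1/|h|)$. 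Since $c_d$ is complex and $J_d$ is arbitrary, I would not use positivity directly. Instead I write $\ind_{\|\theta\|\leq\delta}=\Phi^-(\theta)+\rho(\theta)$ with $0\leq\rho\leq\Phi^+-\Phi^-$. This bounds the contribution of $\rho$ by a sum of $|c_d|$ against $\Phi^+-\Phi^-$, which is again a trigonometric polynomial, now with zero coefficient $O(1/H)$.

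The zero frequency of $\Phi^-$ gives exactly $\sum_d c_d\sum_{m\in J_d}b_{dm}$ (twisted or not), up to an error $O(1/H)$ per term. The same is true of the zero frequency of $\Phi^+-\Phi^-$ in the $\rho$-term. The total of these errors is
\[
\ll\sum_{d\leq M}|c_d|\,\frac{y}{dH}\ll \frac{x(\log x)^{C}}{H}=\delta x^{1-\varepsilon'}(\log x)^C,
\]
which is $\ll X(\log y)^{-A}$ because $X\asymp\delta y\varphi(D)/D$ and $y\geq x/\log x$.

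It remains to bound, for $0<|h|\leq H$, the sums
\[
\sum_{d\leq M}|c_d|\ind_{(d,D)=1}\Big|\sum_{\substack{m\in J_d\\(m,D)=1}}\chi(m)^{j}\,e(h\alpha dm)\Big|,\qquad j\in\{0,1\},
\]
weighted by $\min(\delta,1/|h|)$; the phase $e(h\beta)$ has absolute value one. Both $\chi(m)^j$ and $\ind_{(m,D)=1}$ are periodic modulo $D$. So I split $m=r+Dt$ with $r$ running over residues modulo $D$. Each inner sum over $t$ is then a geometric series, bounded by $\min\big(y/(dD),\,\|hdD\alpha\|^{-1}\big)$. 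Summing over $r$, $h$ and $d$, and collecting $n=hdD\leq HMD$ with multiplicity $\ll\tau_3(n)\ll x^{o(1)}$, the non-zero frequencies contribute at most
\[
x^{o(1)}\,\delta D\sum_{n\leq HMD}\min\Big(\frac{xHD}{n},\frac1{\|n\alpha\|}\Big)\ll x^{o(1)}\delta D\Big(\frac{xHD}{q}+HMD+q\Big).
\]
Here I used the standard lemma $\sum_{n\leq N}\min(Y/n,\|n\alpha\|^{-1})\ll(Y/q+N+q)\log(qNY)$. A dyadic decomposition in $h$ handles the weights $\min(\delta,1/|h|)$, and the $\ll\delta$ bound suffices up to logarithms.

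Finally I check the three terms against $\delta x$, using $D=x^{1/V}=x^{o(1)}$:
\begin{itemize}
\item $xH/q\leq x^{1/3+\eta+\varepsilon'}\cdot x^{2/3-\kappa}=x^{1-(\kappa-\eta)+\varepsilon'}$, which is fine since $\kappa>\eta$;
\item $HM\leq x^{1/3+\eta+\varepsilon'+2/3-\gamma}=x^{1-(\gamma-\eta)+\varepsilon'}$, which is fine since $\gamma>\eta$;
\item $q\leq x^{1-\kappa}$, which is fine.
\end{itemize}
Choosing $\varepsilon'<\tfrac12\min(\kappa-\eta,\gamma-\eta)$ gives a power saving, hence any $(\log y)^{-A}$, uniformly in $\beta$, $y$ and $J_d$.

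The main obstacle is the bookkeeping in the middle step, which must handle simultaneously the arbitrary intervals $J_d$, the complex weights (hence the majorant/minorant decomposition), and the periodic twist by $\chi$ and by the coprimality to $D$. The losses of $D$ and $x^{o(1)}$ incurred there are harmless precisely because $\log V=o(\log D)$ forces $D=x^{o(1)}$.
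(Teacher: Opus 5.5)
Your proposal is correct and follows essentially the same route as the paper. Both proofs use a Fourier majorant/minorant for the Bohr condition, split the $m$-variable into residue classes modulo $D$ to freeze $\chi$ and the coprimality condition, apply the geometric-sum bound and Lemma~\ref{lem:reciprocal} with $x^{1/3+\kappa}\leq q\leq x^{1-\kappa}$, and obtain the same power savings from $\kappa>\eta$ and $\gamma>\eta$. The only differences are cosmetic: you use Vaaler's trigonometric polynomials instead of the smooth $\Phi^\pm$ of Lemma~\ref{lem:fourier}, and you handle the complex weights via $\ind=\Phi^-+\rho$ rather than, as the paper does, sandwiching the unweighted sums $S_{d,r}$ on residue classes where the weights are constant.
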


\begin{proposition}\label{prop:sievehypothesis}
Uniformly for $x/\log x\leq y\leq x/2$, the sequence $(b_n)$ satisfies
\begin{gather}
\sum_n b_n\Lambda(n)
=(1+o(1))\ee^{\gamma_*}\log w
\prod_{p\leq w}(1-g(p))X
\sim2\delta y,\label{eq:normalization}\\
\sum_{k\sim w}\Lambda(k)g(k)
=(1+o(1))\sum_{k\sim w}\frac{\Lambda(k)}k
\label{eq:referenceg}
\end{gather}
for $w>y^a$, where $a>0$ is fixed. Moreover, for $t>w\geq D^9$,
\begin{equation}\label{eq:exceptionalaxiom}
\left|\sum_{n\leq t}\chi(n)g(n)\right|\ll L(1,\chi),\qquad
\sum_{w<n\leq t}\lambda(n)g(n)\ll L(1,\chi)\log t.
\end{equation}
\end{proposition}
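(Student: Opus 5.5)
We prove the three displays separately. Each is a routine estimate about the reference sequence $b_n$ and the density $g$, combined with standard facts about a real character with an exceptional zero. We regard $D$ as fixed relative to $x$ growing, but we only use $\log D=o(\log x)$; this holds because $V\to\infty$.

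\emph{Normalization \eqref{eq:normalization}.} We compute directly:
\[
\sum_n b_n\Lambda(n)=2\delta\Bigl(\psi(2y)-\psi(y)-\sum_{\substack{y<n\leq 2y\\ (n,D)>1}}\Lambda(n)\Bigr).
\]
The excluded terms are powers of the $\omega(D)$ primes dividing $D$. They contribute $\ll \omega(D)\log y\ll \log D\log x=o(y)$. By the prime number theorem, $\psi(2y)-\psi(y)\sim y$, so the sum is $\sim 2\delta y$. For the middle expression, we use Mertens' theorem with $w$ a fixed power of $y$ (so $w>D$ eventually). This gives
\[
\prod_{p\leq w}(1-g(p))=\prod_{p\leq w,\,p\nmid D}(1-1/p)=\frac{D}{\varphi(D)}\cdot\frac{\ee^{-\gamma_*}}{\log w}(1+o(1)).
\]
Combined with \eqref{eq:X}, $X=2\delta y\varphi(D)/D+O(\delta\tau(D))$, the middle expression is $(1+o(1))2\delta y$. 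Here we need $\tau(D)=o(y)$, which is clear. With the normalization of \cite[Theorem~16]{Mer24}, this is what \eqref{eq:normalization} asserts.

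\emph{Reference density \eqref{eq:referenceg}.} The two sums differ only on prime powers $k\sim w$ with $(k,D)>1$. Their contribution is $\ll \omega(D)\max_{p\mid D}\sum_{j:\,p^j\sim w}\log p/p^j\ll \omega(D)\log w/w$. The main term $\sum_{k\sim w}\Lambda(k)/k$ is $\asymp 1$. So the ratio is $1+o(1)$, since $w>y^a$ and $\omega(D)\ll\log D$.

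\emph{Exceptional bounds \eqref{eq:exceptionalaxiom}.} This is the main part. Since $\chi$ has conductor $D$, $\chi(n)=0$ when $(n,D)>1$. Hence $\sum_{n\leq t}\chi(n)g(n)=\sum_{n\leq t}\chi(n)/n$. Partial summation with Pólya--Vinogradov gives
\[
\sum_{n\leq t}\frac{\chi(n)}n=L(1,\chi)+O\bigl(\sqrt D\log D/t\bigr).
\]
For $t>D^9$ the error is $\ll D^{-8}$. Siegel's (or even the trivial effective) lower bound $L(1,\chi)\gg D^{-1/2}$ absorbs this error, giving the first bound.

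For the second bound, we use $\lambda=\ind*\chi$ and note that $\lambda(n)g(n)=\lambda(n)\ind_{(n,D)=1}/n$. We follow the standard hyperbola argument for $\sum_{n\leq t}\lambda(n)/n$, restricted to $(n,D)=1$. We remove the coprimality condition by a Möbius sum over $e\mid D$, or, more simply, we bound $\lambda(n)\ind_{(n,D)=1}\leq\lambda(n)$ using nonnegativity. So it suffices to show
\[
\sum_{w<n\leq t}\frac{\lambda(n)}n\ll L(1,\chi)\log t.
\]
Write the sum as $\sum_{ab\in(w,t]}\chi(b)/(ab)$ and apply the Dirichlet hyperbola method with split at $\sqrt{\,\cdot\,}$. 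The first piece is
\[
\sum_{a}\frac1a\sum_{w/a<b\leq t/a}\frac{\chi(b)}b.
\]
When $w/a\geq D^{9/2}$ say, the inner sum is $O(D^{-3})$. When $w/a<D^{4}$ is small, the inner sum is a difference of two partial sums of $\chi(b)/b$, each equal to $L(1,\chi)$ up to a Pólya--Vinogradov error. The second piece is $\sum_b\chi(b)/b\cdot\sum_a 1/a$ over the complementary range. Partial summation on the outer character sum, using Pólya--Vinogradov once more, gives $L(1,\chi)\log t+O(\sqrt D\log D\,\log t/\sqrt w)$. Every error term carries a negative power of $D$ because $w\geq D^9$, so each is $\ll D^{-1}\ll L(1,\chi)$.

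The expected obstacle is this bookkeeping. Every term must genuinely carry the factor $L(1,\chi)$, rather than a bare $\log t$ arising from the $\sum 1/a$ factors. We handle this exactly as in the standard computation $\sum_{n\le t}\lambda(n)/n=L(1,\chi)\log t+O(1)$. The $O(1)$ term comes from $L'(1,\chi)$ and the constants, and it cancels between the two cutoffs $w$ and $t$ up to $O(\sqrt D/\sqrt w)$. The nonnegativity of $\lambda$ then justifies dropping the coprimality restriction.
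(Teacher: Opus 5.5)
Your proofs of \eqref{eq:normalization}, \eqref{eq:referenceg} and the first bound in \eqref{eq:exceptionalaxiom} match the paper's; using Pólya--Vinogradov instead of periodicity makes no difference. So does your reduction of the second bound to $\sum_{w<n\leq t}\lambda(n)/n$ via $\lambda\geq0$. You diverge only in bounding that last sum. The paper applies the hyperbola method to the \emph{unweighted} sum, getting $\sum_{n\leq t}\lambda(n)=L(1,\chi)t+O(D\sqrt t)$. Partial summation then gives $L(1,\chi)\log(t/w)+O(L(1,\chi)+D/\sqrt w)$ directly, and $D/\sqrt w\leq D^{-7/2}\ll L(1,\chi)$ finishes. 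Since the unweighted count has no secondary constant, nothing has to cancel.

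You work with the weighted sum, and your case analysis as written contains a false step. When $w/a$ is small, $\sum_{b\leq w/a}\chi(b)/b$ is not $L(1,\chi)$ up to an admissible error. For $1\leq w/a<2$ it equals $1$, and the Pólya--Vinogradov tail bound $\sqrt D\log D\cdot a/w$ is worse than trivial once $w/a<\sqrt D$. Organised by $a$, the sum has large cancellations across $a$-ranges that this treatment cannot see. Your ranges $w/a\geq D^{9/2}$ and $w/a<D^4$ also leave a gap.

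Your closing paragraph does name a valid repair: the full asymptotic
\[
\sum_{n\leq T}\frac{\lambda(n)}n=L(1,\chi)(\log T+\gamma_*)+L'(1,\chi)+O\Bigl(\frac{\sqrt D\log D\log T}{\sqrt T}\Bigr),
\]
differenced at $T=t$ and $T=w$. The constant $\gamma_*L(1,\chi)+L'(1,\chi)$ must cancel exactly there, since no bound $L'(1,\chi)\ll L(1,\chi)$ is available.

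A cleaner form of your route splits the region $w<ab\leq t$ by the size of $b$ rather than $a$.
\begin{itemize}
\item For $b>D^{9/2}$, partial summation with Pólya--Vinogradov bounds each inner $b$-sum by $O(D^{-4}\log D)$. Summing over $a\leq t$ gives $O(D^{-4}\log D\log t)$ in total.
\item For $b\leq D^{9/2}$, sum over $a\in(w/b,t/b]$ first. This gives $\log(t/w)+O(b/w)$, so the piece equals $L(1,\chi)\log(t/w)+O(D^{-4}\log D\log t+D^{-9/2})$.
\end{itemize}
No constant term ever appears, and $L(1,\chi)\gg D^{-1/2}$ absorbs the errors, exactly as in the paper. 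The two routes are about equally short. The paper packages the character input into one unweighted asymptotic. The $b$-split version of yours never produces a constant that must cancel.
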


\begin{proof}[Proof of Theorem~\ref{thm:lower}]
For any $y$ in $[x/\log x,x/2]$, the scale assumptions $V\to\infty$ and $\log V=o(\log D)$ as $D\to\infty$ imply that
\begin{equation*}
D=y^{o(1)}\quad\text{and}\quad\log\log y=o(\log D)\quad\text{as }D\to\infty.
\end{equation*}
Hence, $D\gg_A(\log y)^A$ for every fixed $A>0$.
By Proposition~\ref{prop:typei} and the verifications in the proof
of Proposition~\ref{prop:sievehypothesis}, \cite[Theorem~16]{Mer24}
applies with exponent of distribution $2/3-\gamma$, under the
normalization \eqref{eq:normalization}.\footnote{In the first normalization display
of \cite[Theorem~16]{Mer24}, the factor $1/(\ee^{\gamma_*}\log w)$
must read $\ee^{\gamma_*}\log w$. We use the normalization in
\cite[Lemma~2]{Mer24}, which is also the one used in
the sieve argument.}
Therefore, if we let $J=\lfloor\log_2\log x\rfloor$, then for each $j\in\{0,1,\ldots,J-1\}$, we obtain
\begin{equation}\label{eq:dyadicsieve}
\sum_{\substack{n\sim x/2^{j+1}\\(n,D)=1,\,\|\alpha n+\beta\|\leq\delta}}\Lambda(n)
\geq2\delta\left\{c(\gamma)-O(L(1,\chi)(\log x)^5)-o(1)\right\}
\sum_{\substack{n\sim x/2^{j+1}\\(n,D)=1}}\Lambda(n).
\end{equation}

Taking the errors in the coefficient in braces in \eqref{eq:dyadicsieve}
to be nonnegative, the conclusion is immediate if that coefficient is
nonpositive. Otherwise it is bounded above by $c(\gamma)$.
Summing \eqref{eq:dyadicsieve} for
$j\in\{0,1,\ldots,J-1\}$, we infer that
\begin{align*}
\sum_{\substack{n\leq x\\\|\alpha n+\beta\|\leq\delta}}\Lambda(n)
&\geq\sum_{\substack{x/2^J<n\leq x\\\|\alpha n+\beta\|\leq\delta}}\Lambda(n)\geq\sum_{\substack{x/2^J<n\leq x\\(n,D)=1,\,\|\alpha n+\beta\|\leq\delta}}\Lambda(n)\\
&\geq 2\delta\left\{c(\gamma)-O(L(1,\chi)(\log x)^5)-o(1)\right\}\sum_{\substack{x/2^J<n\leq x\\(n,D)=1}}\Lambda(n).
\end{align*}

Since $D=x^{o(1)}$, we have
$\sum_{n\leq x,\,(n,D)>1}\Lambda(n)\ll\omega(D)\log x=x^{o(1)}$.
Also, Chebyshev's estimates give
$\sum_{n\leq x/2^{J}}\Lambda(n)\ll x/2^J\ll x/\log x$.
The prime number theorem completes the proof.
\end{proof}

\section{Proofs of Theorems \texorpdfstring{\ref{thm:main} and
\ref{thm:harman}}{1.1 and 1.2}}\label{sec:mainproofs}

Theorems~\ref{thm:asymp} and~\ref{thm:lower} are local in the parameters in the sense that they require a scale $x=D^V$ at which a convergent denominator of $\alpha$
lies in a prescribed range. The following lemma supplies such scales for
every fixed strength $k$. This is the only place where the finite-type
hypothesis on the irrational number $\alpha$ is used.

\begin{lemma}[Diophantine synchronization]\label{lem:synchronization}
Let $\alpha>0$ be an irrational number of finite type, $\vartheta\in(0,1)$, and $(\beta_j)$ be a sequence of Siegel zeros of fixed strength $k\geq1$, associated with characters of conductors $D_j$. There exist pairs of coprime positive integers $a_j$ and $q_j$, as well as real numbers $V_j$, defined as $V_j\vcentcolon=2(1+\vartheta)^{-1}\log q_j/(\log D_j)$, such that
\begin{gather*}
\left|\alpha-\frac{a_j}{q_j}\right|\leq \frac{1}{q_j^2}\quad\text{for all }j\in\N,\quad\text{and}\\
\quad V_j\to\infty,\quad \log V_j=o(\log D_j),\quad
V_j^{16}\xi_j^{-1}\to0
\quad \text{as $j \to \infty,$}
\end{gather*}
where $\xi_j:=1/((1-\beta_j)\log D_j)$ for all $j\in\N$. If $k\geq2$, we further have that
\begin{equation}\label{eq:syncstrength}
(1-\beta_j)(\log D_j)^2(\log q_j)^{k-2}\to0 \qquad \text{as $j \to \infty.$}
\end{equation}
\end{lemma}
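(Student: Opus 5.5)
The approach is to choose, for each $j$, a convergent denominator $q_j$ of $\alpha$ lying in a window of the form $[D_j^{T_j},\,D_j^{CT_j}]$, where $T_j\to\infty$ is a slowly growing function adapted to the Siegel zeros. Finite type controls the gaps between consecutive convergent denominators, so such a $q_j$ always exists. Write $p_n/q_n$ for the convergents of $\alpha$; each satisfies $|\alpha-p_n/q_n|<1/q_n^2$ with $\gcd(p_n,q_n)=1$. Since $\uptau(\alpha)<\infty$, fix $\tau>\uptau(\alpha)$. Then $\|q_n\alpha\|\geq q_n^{-\tau}$ for all large $n$, and combined with $\|q_n\alpha\|<1/q_{n+1}$ this gives $q_{n+1}\leq q_n^{\tau}$. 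Hence for any $Q$ large there is a convergent denominator in $[Q,Q^{\tau}]$. Equivalently, $\log q_{n+1}\leq\tau\log q_n$, so every interval $[L,\tau L]$ of $\log$-scale contains some $\log q_n$.

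Next, choose the target size. Put $\xi_j=1/((1-\beta_j)\log D_j)$. The strength-$k$ hypothesis says $\epsilon_j:=(1-\beta_j)(\log D_j)^k\to0$, so $\xi_j=(\log D_j)^{k-1}/\epsilon_j\to\infty$ (for $k=1$, $\xi_j=1/\epsilon_j$). Set
\[
T_j:=\min\Bigl\{\xi_j^{1/32},\ \log\log D_j,\ \epsilon_j^{-1/(2\max(k-2,1))}\Bigr\},
\]
which tends to infinity. Then pick $q_j$ a convergent denominator with $\log q_j\in[T_j\log D_j,\ \tau T_j\log D_j]$. This gives $V_j=2(1+\vartheta)^{-1}\log q_j/\log D_j\asymp_{\tau,\vartheta} T_j$, so $V_j\to\infty$. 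Moreover $\log V_j\ll\log\log\log D_j=o(\log D_j)$, and $V_j^{16}\xi_j^{-1}\ll T_j^{16}/\xi_j\leq\xi_j^{-1/2}\to0$.

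For $k\geq2$, estimate
\[
(1-\beta_j)(\log D_j)^2(\log q_j)^{k-2}\ll_\tau (1-\beta_j)(\log D_j)^k\,T_j^{k-2}=\epsilon_j T_j^{k-2}\leq\epsilon_j^{1/2}\to0,
\]
with the case $k=2$ trivially equal to $\epsilon_j$. This proves \eqref{eq:syncstrength}.

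The main obstacle is the first step: showing that finite type forces $q_{n+1}\leq q_n^{\tau}$ eventually, which is exactly what makes a convergent available in a prescribed multiplicative window. Care is needed because the definition of $\uptau(\alpha)$ via $\liminf$ yields $q_n^{\tau}\|q_n\alpha\|\to\infty$, hence $\geq1$ for all large $n$, and that is all the argument requires. Everything else is the choice of $T_j$ growing slowly enough to satisfy the three competing constraints simultaneously.
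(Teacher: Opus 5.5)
Your proposal is correct and is essentially the paper's argument: the finite-type hypothesis gives $q_{n+1}\ll q_n^{\tau}$ for consecutive convergent denominators, and one then takes a convergent denominator just above $D_j^{T_j}$ with $T_j\to\infty$ slowly. The only difference is that the paper uses the single choice $T_j=\min\{s_j^{1/(100k)},\log D_j\}$ with $s_j=1/((1-\beta_j)(\log D_j)^k)$, whereas you impose the three growth constraints separately, which changes nothing of substance.
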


\begin{proof}
Choose $t_\alpha>\max\{1,\uptau(\alpha)\}$, so that
$\|m\alpha\|\geq c_\alpha m^{-t_\alpha}$ for every $m\geq1$ and some
$c_\alpha>0$. If $A_r/Q_r$ are the convergents of $\alpha$, then
$\|Q_r\alpha\|\leq|Q_r\alpha-A_r|<Q_{r+1}^{-1}$. Hence
\begin{equation}\label{eq:convergentgrowth}
Q_{r+1}\leq c_\alpha^{-1}Q_r^{t_\alpha}\quad(r\in\N).
\end{equation}
Put
\[
s_j:=\frac1{(1-\beta_j)(\log D_j)^k}\quad\text{and}\quad
T_j:=\min\{s_j^{1/(100k)},\log D_j\}.
\]
Then $s_j,T_j\to\infty$. Choose $q_j$ to be the first convergent
denominator exceeding $D_j^{T_j}$, with corresponding numerator $a_j$.
The convergent estimate gives $|\alpha-a_j/q_j|<q_j^{-2}$, and
\eqref{eq:convergentgrowth} gives
$D_j^{T_j}<q_j\ll_\alpha D_j^{t_\alpha T_j}$.
Thus $V_j\asymp_{\alpha,\vartheta}T_j\to\infty$ and
$\log V_j=O_{\alpha,\vartheta}(1+\log\log D_j)=o(\log D_j)$.
Since $\xi_j=s_j(\log D_j)^{k-1}$, we also have
\[
V_j^{16}\xi_j^{-1}\ll_{\alpha,\vartheta}T_j^{16}s_j^{-1}
\leq s_j^{-1+16/(100k)}\to0.
\]
For $k\geq2$, the expression in \eqref{eq:syncstrength} is
$\ll_\vartheta V_j^{k-2}/s_j\ll_{\alpha,\vartheta}s_j^{-99/100}\to0$.
This completes the proof.
\end{proof}

\begin{proof}[Proof of Theorem~\ref{thm:main}]
Let $D_j$ be the conductors associated with the assumed Siegel zeros
$(\beta_j)$. It suffices to consider $\eps\in(0,1/12]$.
Apply Lemma~\ref{lem:synchronization} with $k=1$ and
$\vartheta=1/3-\eps$, and put $x_j=D_j^{V_j}$.
Then $q_j=x_j^{2/3-\eps/2}$ satisfies
\eqref{eq:A-diophantine-hypothesis}. Theorem~\ref{thm:asymp} and the
prime number theorem give
\[
\sum_{\substack{n\leq x_j\\\|\alpha n+\beta\|\leq x_j^{-1/3+\eps}}}
\Lambda(n)\sim2x_j^{2/3+\eps} \qquad \text{as $j \to \infty.$}
\]
The proper prime powers contribute $O(x_j^{1/2}(\log x_j)^2)$.
Hence the number of primes in the sum tends to infinity. Each prime $p$ in that sum satisfies $\|\alpha p+\beta\|\leq x_j^{-1/3+\eps}\leq p^{-1/3+\eps}$, and the theorem follows.
\end{proof}

\begin{proof}[Proof of Theorem~\ref{thm:harman}]
Fix $0<\eta<\gamma<\eta_0$ and apply
Lemma~\ref{lem:synchronization} with $k=7$ and
$\vartheta=1/3+\eta$. Let $D_j$ be the conductor associated with
$\beta_j$, and put $x_j\vcentcolon=D_j^{V_j}=q_j^{2/(1+\vartheta)}$.
Then $q_j=x_j^{2/3+\eta/2}$ satisfies \eqref{eq:lowerq} for every
$\kappa\in(\eta,1/6)$. The remaining conditions of
Theorem~\ref{thm:lower} hold with $a=a_j$, $q=q_j$, $x=x_j$ and
$V=V_j$, and so the theorem yields
\begin{align}\label{1}
\sum_{\substack{n\leq x_j\\\|\alpha n+\beta\|\leq x_j^{-\vartheta}}}\Lambda(n)
\geq\left\{c(\gamma)-O(L(1,\chi_j)(\log x_j)^5)-o(1)\right\}
2x_j^{-\vartheta}\psi(x_j).
\end{align}

By \cite[Theorem~11.4]{montgomery2007multiplicative},
$L(1,\chi_j)\ll(1-\beta_j)(\log D_j)^2$.
Together with \eqref{eq:syncstrength} for $k=7$ and
$\log x_j\asymp_\vartheta\log q_j$, this gives
$L(1,\chi_j)(\log x_j)^5=o(1)$.
Since $c(\gamma)>0$ for $\gamma<\eta_0$, the sum in \eqref{1} is
$\gg_{\eta,\gamma}x_j^{1-\vartheta}$.
The proper prime powers contribute $O(x_j^{1/2}(\log x_j)^2)$,
which is negligible since $\vartheta<1/2$.
Hence the number of primes in the sum tends to infinity, and each
satisfies $\|\alpha p+\beta\|\leq x_j^{-\vartheta}\leq p^{-1/3-\eta}$.
This completes the proof.
\end{proof}

\section{Preliminaries}

In this section, we collect the lemmata required for the proofs of the propositions.

\begin{lemma}[Fourier majorant and minorant]\label{lem:fourier}
Let $t$ and $\delta$ be such that $0<t\leq\delta/2$ and $\delta+t<1/2$. There exist smooth, one-periodic functions $\Phi^\pm$ such that
\[
0\leq\Phi^-\leq\ind_{\|\cdot\|\leq\delta}\leq\Phi^+\leq1,
\qquad \widehat\Phi^\pm(0)=2\delta+O(t).
\]
For $h\ne0$ and every fixed $A>0$,
\begin{equation}\label{bPhi}
|\widehat\Phi^\pm(h)|\ll_A
\min\{\delta,|h|^{-1}\}(1+|h|t)^{-A}.    
\end{equation}
Consequently, if $Ht\geq1$, then uniformly in $s$,
\begin{equation}\label{eq:fourierexpansion}
\Phi^\pm(s)=2\delta
+\sum_{0<|h|\leq H}\widehat\Phi^\pm(h)e(hs)
+O(t)+O_A((Ht)^{-A}).
\end{equation}
\end{lemma}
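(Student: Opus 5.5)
We construct $\Phi^\pm$ by convolution. Fix a smooth, even, non-negative bump $\phi$ on $\R$, supported in $[-1,1]$ with $\int\phi=1$. Put $\phi_t(s):=t^{-1}\phi(s/t)$ and periodize it, so that it becomes a one-periodic kernel of mass one supported in $\|s\|\leq t$. Define
\[
\Phi^+:=\ind_{\|\cdot\|\leq\delta+t}*\phi_t,\qquad \Phi^-:=\ind_{\|\cdot\|\leq\delta-t}*\phi_t,
\]
where the convolutions are taken on $\R/\Z$. Since $t\leq\delta/2$, we have $\delta-t>0$. Since $\delta+t<1/2$, both arcs are proper subintervals of the circle.

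\emph{Pointwise bounds.} The kernel is non-negative with mass one, so $0\leq\Phi^\pm\leq1$. If $\|s\|\leq\delta$, then every point within $t$ of $s$ lies in the enlarged arc, so $\Phi^+(s)=1$. If $\|s\|>\delta$, then no point within $t$ of $s$ lies in the shrunken arc, so $\Phi^-(s)=0$. Together these give $\Phi^-\leq\ind_{\|\cdot\|\leq\delta}\leq\Phi^+$. Each $\Phi^\pm$ is smooth because $\phi_t$ is.

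\emph{Fourier coefficients.} Convolution multiplies coefficients, so
\[
\widehat\Phi^\pm(h)=\widehat{\ind}_{\delta\pm t}(h)\,\widehat\phi(th),
\]
where $\widehat\phi$ is the Fourier transform of $\phi$ on $\R$. This factorization holds because $\phi_t$ has support of length $2t<1$, so periodizing does not overlap. At $h=0$ we get $\widehat\Phi^\pm(0)=2(\delta\pm t)=2\delta+O(t)$. For $h\neq0$ we have
\[
\widehat{\ind}_{\delta'}(h)=\frac{\sin(2\pi h\delta')}{\pi h},
\]
which is bounded in absolute value by $\min\{2\delta',1/(\pi|h|)\}\ll\min\{\delta,|h|^{-1}\}$, using $\delta'\leq 3\delta/2$. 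Since $\phi$ is smooth and compactly supported, repeated integration by parts gives $|\widehat\phi(\xi)|\ll_A(1+|\xi|)^{-A}$. Taking $\xi=th$ yields \eqref{bPhi}.

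\emph{The expansion \eqref{eq:fourierexpansion}.} Because $\Phi^\pm$ is smooth, its Fourier series converges absolutely, so
\[
\Phi^\pm(s)=\widehat\Phi^\pm(0)+\sum_{h\neq0}\widehat\Phi^\pm(h)e(hs).
\]
Replacing $\widehat\Phi^\pm(0)$ by $2\delta$ costs $O(t)$. For the tail $|h|>H$, we apply \eqref{bPhi} with exponent $A+1$ and the bound $|h|^{-1}$:
\[
\sum_{|h|>H}|h|^{-1}(|h|t)^{-A-1}\ll t^{-A-1}H^{-A-1}=(Ht)^{-A-1}\leq(Ht)^{-A},
\]
where the last step uses $Ht\geq1$. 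This bound does not depend on $s$, which gives the stated uniformity.

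The only step needing care is the support bookkeeping on the circle. It is exactly what the hypotheses $\delta+t<1/2$ and $t\leq\delta/2$ guarantee: the enlarged arc does not wrap around, and the shrunken arc is non-empty.
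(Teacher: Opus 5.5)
Your proposal is correct and is essentially the paper's own proof. It uses the same convolution of the enlarged and shrunken arc indicators with a periodized rescaled bump, and the same factorization $\widehat\Phi^\pm(h)=\widehat{\ind}_{\delta\pm t}(h)\,\widehat\phi(th)$, which combines the $\min\{\delta,|h|^{-1}\}$ bound for the arcs with the integration-by-parts decay of the kernel, followed by the same tail summation for \eqref{eq:fourierexpansion}. One cosmetic point: the identity $\widehat{\phi_t}(h)=\widehat\phi(th)$ for the periodized kernel holds by unfolding whatever the support length, so your non-overlap justification is unnecessary, though harmless.
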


\begin{proof}
This lemma is in the spirit of
\cite[Lemma~2.1]{Har07}. For completeness, let us sketch a proof. Take a non-negative smooth
function $K$, supported on $[-1,1]$, with integral $1$ along the real line, and periodize the rescaled function $K_t(s):=t^{-1}K(s/t)$. On $\R/\Z$, set
\[
\Phi^+=\ind_{[-\delta-t,\delta+t]}*K_t,\qquad
\Phi^-=\ind_{[-\delta+t,\delta-t]}*K_t,
\]
where the convolutions here are on $\R/\Z$.
The inequalities $0\leq \Phi^-\leq \ind_{\|\cdot\|\leq\delta}\leq \Phi^+\leq1$ and the estimation of $\widehat\Phi^{\pm}(0)$ are immediate. The Fourier transforms
of the interval indicator functions are $O(\min\{\delta,|h|^{-1}\})$, while integration by
parts gives $\widehat K_t(h)\ll_A(1+|h|t)^{-A}$. Hence, the bounds on $\hat\Phi^{\pm}(h)$ follow by multiplying the above two bounds since $\hat\Phi^{\pm}=\hat\ind_{[-\delta\mp t,\delta\pm t]}\cdot \hat K_t$. Using the resulting bound on $\hat\Phi^{\pm}(h)$ to estimate the tail proves \eqref{eq:fourierexpansion}.
\end{proof}

\begin{lemma}[Exceptional-character rough sum]\label{lem:roughlambda}
Let $\chi$ be a primitive quadratic character modulo $D\geq 2$ and set $\lambda=\ind\ast\chi$. Assume also that $L(\cdot,\chi)$ has a real zero $\beta_{\chi}$ such that $\beta_\chi=1-1/(\xi\log D)$ for some $\xi\geq 10$. If $x=D^V$ for some $V>2$, and
$$z=D^{\min\{\sqrt{V/(\log \xi)},\,2\}},$$
then there exists some absolute constant $c>0$ such that
\begin{eqnarray*}
\sum_{\substack{z<n\leq x\\P^-(n)>z}}\frac{\lambda(n)}{n}\ll\frac{V^3}{\xi}+\exp\Big(\!\!-c\sqrt{V\log \xi}\Big).
\end{eqnarray*}    
\end{lemma}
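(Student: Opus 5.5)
We bound the sum through the multiplicative structure of $\lambda$. Every prime $p>z$ satisfies $p>D$, so $\chi(p)$ is defined. Since every $n$ in the sum is $z$-rough, we can majorize by an Euler product over primes in $(z,x]$. For any $\sigma\in[0,1/\log x]$ we use the Rankin-type bound
\[
\sum_{\substack{z<n\leq x\\P^-(n)>z}}\frac{\lambda(n)}{n}\leq \ee\Bigl(\prod_{z<p\leq x}\Bigl(1+\sum_{a\geq1}\frac{\lambda(p^a)}{p^{a}}\Bigr)-1\Bigr),
\]
which is immediate from $n\leq x$ and $\lambda\geq0$. Because $\lambda(p^a)\leq a+1$ and the prime-square terms contribute $O(1/z)$, the right-hand side is at most
\[
\exp\Bigl(\sum_{z<p\leq x}\frac{1+\chi(p)}{p}+O(1/z)\Bigr)-1 .
\]
It therefore suffices to show that
\[
\sum_{z<p\leq x}\frac{1+\chi(p)}{p}\ll \frac{V^3}{\xi}+\exp(-c\sqrt{V\log\xi}).
\]
This quantity is small, so $\ee^{y}-1\ll y$ applies. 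Note that $V^3/\xi$ may exceed $1$, but in that regime the claim is trivial: the left-hand side of the lemma is at most $\sum_{n\le x}\lambda(n)/n\ll L(1,\chi)\log x+\dots$, and alternatively we may simply assume $V^3\leq\xi$.

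The core step is the prime sum, which we treat with the standard Siegel-zero input that $\chi(p)=-1$ for most primes in long ranges. We compare $\sum_{p\le t}(1+\chi(p))\log p=\psi(t)+\psi(t,\chi)+O(\sqrt t)$ using the explicit formula or a zero-free region with Deuring--Heilbronn repulsion. We have
\[
\psi(t,\chi)=-\frac{t^{\beta_\chi}}{\beta_\chi}+O\bigl(t\exp(-c\log t/\log D\cdot\log\xi)+\dots\bigr).
\]
Heilbronn's phenomenon says that the other zeros of $L(s,\chi)$ and $\zeta$ lie in $\sigma<1-c\log\xi/\log(DT)$. Consequently
\[
\psi(t)+\psi(t,\chi)=t(1-t^{\beta_\chi-1})+O\bigl(t\,\ee^{-c\log\xi\,\log t/\log D}+t\,\ee^{-c\sqrt{\log t}}\bigr)
\]
for $t\geq z$. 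Here $1-t^{\beta_\chi-1}\leq(1-\beta_\chi)\log t=\log t/(\xi\log D)$.

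Next we integrate by partial summation against $dt/(t\log t)$ over $(z,x]$. The main term gives $\int_z^x \frac{dt}{\xi t\log D}\leq \log(x)/(\xi\log D)=V/\xi$, which is already better than $V^3/\xi$. The Deuring--Heilbronn error at $t=D^s$ gives $\int_v^V \xi^{-cs}\,ds/s\ll \xi^{-cv}$. When $v=\sqrt{V/\log\xi}$, this is $\exp(-c\sqrt{V\log\xi})$. When $v=2$, so that $V\geq4\log\xi$, it is $\xi^{-2c}$, which must also be shown to be $\ll V^3/\xi+\exp(-c\sqrt{V\log\xi})$. Since $\sqrt{V\log \xi}\ge 2\log\xi$, it suffices to have $\xi^{-2c}\ll \xi^{-2c'}$ with the constant adjusted; this is fine after shrinking $c$.

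I expect the main obstacle to be securing a quantitative Deuring--Heilbronn estimate in a usable form uniformly down to $t\approx D^{v}$ with $v$ possibly small ($v\ll 1$ in the regime $V<4\log\xi$). For $t$ only a small power of $D$, the zero-free region is too weak, so the explicit formula does not directly apply. The plan there is to use the log-free zero density form of Deuring--Heilbronn, as in Friedlander--Iwaniec's \emph{Opera de Cribro} or Linnik-type theorems. That form gives $\sum_{p\le t}(1+\chi(p))\log p\ll t\,(\log t/(\xi\log D)+\xi^{-c\log t/\log D})$ for $t\geq D^{C}$. For smaller $t$, in particular when $z<D^{C}$, we fall back on the trivial sieve bound $\sum_{z<p\le D^C}(1+\chi(p))/p\ll \log(C/v)$. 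This bound does not look small, so in that range we must instead use the elementary estimate for $\lambda$ itself. The relevant input is $\sum_{n\le t}\lambda(n)=L(1,\chi)t+O(\sqrt{D}\,t^{1/2}\dots)$ together with $L(1,\chi)\ll (1-\beta_\chi)\log^2 D=\log D/\xi$. Combining this with a fundamental-lemma sieve to impose $P^-(n)>z$ plausibly produces the polynomial loss $V^3/\xi$, which would account for the cube in the stated bound. This is the step on which I would spend the most care.
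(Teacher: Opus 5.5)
Your reduction to primes is sound. Since $\lambda\ge0$ is multiplicative, the rough sum is at most $\exp\bigl(\sum_{z<p\le x}(1+\chi(p))/p+O(1/z)\bigr)-1$. This is even more economical than the bound $(v^{-2}\xi^{-v/2}+V/\xi+D^{-v})(V/v)^2$ that the paper simply imports from \cite[Lemma~2.2]{MM23}. The paper then only splits into $v=2$ (where $v^{-2}\xi^{-v/2}=\xi^{-1}/4$ and $\xi\ll D^2$ absorbs $D^{-2}$) and $v<2$ (where $\log\xi>V/4$ and $\log\xi\ll\log D$). The cube in $V^3/\xi$ is just $(V/v)^2\cdot V/\xi$ at $v=2$, not a sieve loss.

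All the content is in the prime sum, and there your argument breaks. First, your absorption step for $v=2$ runs the wrong way. The condition $V\ge4\log\xi$ gives $\exp(-c\sqrt{V\log\xi})\le\xi^{-2c}$, which is an upper bound for the target, not a lower one. When $V/\log\xi\to\infty$, the exponential term is smaller than every fixed power of $\xi^{-1}$, so only $V^3/\xi$ is left. The Deuring--Heilbronn contribution $\xi^{-2c}$ from primes just above $z=D^2$ is then $\ll V^3/\xi$ uniformly only if $c\ge1/2$, and shrinking $c$ makes it worse. So a Deuring--Heilbronn bound with an unspecified constant does not suffice: you need a saving of $\xi^{-1}$ already at scale $D^2$. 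This is what the term $v^{-2}\xi^{-v/2}$ in the cited bound provides, and it is why $z$ is capped at $D^2$. Similarly, the error $t\,\ee^{-c\sqrt{\log t}}$ in your explicit formula integrates to about $\exp(-c\sqrt{v\log D})$. That is far larger than the target when $v=2$, $V$ is large and $\xi$ is a power of $D$, which is not excluded since only $\xi\ll D^{1/2}\log D$ is known. The zeros of $\zeta$ have to be repelled too.

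Second, the small range is not an edge case. Since $v\le2$, one always has $z\le D^2$, so a log-free Deuring--Heilbronn estimate valid only for $t\ge D^C$ never covers $(z,D^C]$. When $V<\log\xi$ one even has $z<D$, so your opening claim that every prime $p>z$ exceeds $D$ is false. Your fallback cannot fill this range:
\begin{itemize}
\item For $t\le D$, the error $\sqrt D\,t^{1/2}$ in $\sum_{n\le t}\lambda(n)=L(1,\chi)t+O(\sqrt D\,t^{1/2}\cdots)$ is at least $t$, so the formula gives nothing.
\item Where the formula is informative, the input $L(1,\chi)\ll(1-\beta_\chi)\log^2D=\log D/\xi$ carries a factor $\log D$; compare $\sum_{w<n\le t}\lambda(n)/n\ll L(1,\chi)\log t$ in the proof of Proposition~\ref{prop:sievehypothesis}. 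Neither $V^3/\xi$ nor $\exp(-c\sqrt{V\log\xi})$ can absorb this factor, since $V>2$ is otherwise unrelated to $D$.
\end{itemize}
The missing ingredient is a Deuring--Heilbronn-type prime-sum estimate valid uniformly at every scale $D^s$ with $s\ge v$, small $s$ included, with a saving of at least $\xi^{-1}$ at $s=2$. The paper does not prove this either; it takes it, in packaged form, from \cite[Lemma~2.2]{MM23}. Given a prime-level input such as $\sum_{z<p\le x}(1+\chi(p))/p\ll v^{-2}\xi^{-v/2}+V/\xi$, your Euler-product step would give the sharper bound $\ll v^{-2}\xi^{-v/2}+V/\xi+D^{-v}$ whenever this quantity is bounded, without the factor $(V/v)^2$.
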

\begin{proof}
By \cite[Lemma~2.2]{MM23}, the sum is
$\ll(v^{-2}\xi^{-v/2}+V/\xi+D^{-v})(V/v)^2$, where
$v=\min\{\sqrt{V/\log\xi},2\}$. If $v=2$, this is
$O(V^3/\xi)$, since $\xi\ll D^2$ as noted after
Theorem~\ref{thm:asymp}. Otherwise $\log\xi>V/4$ and
$\log\xi\ll\log D$, so $v\log\xi=\sqrt{V\log\xi}$ and
$v\log D\gg\sqrt{V\log\xi}$. The polynomial factors are
absorbed by $\exp(-c\sqrt{V\log\xi})$.
\end{proof}

\begin{lemma}[$\beta$-sieve weights]\label{lem:sieveweights}
Let $z>1$ and $u>2$. For $r\in\N$, we define the parameters $z_r\vcentcolon=z^{((u-2)/u)^r}$. There exist two real arithmetic functions $w^+$ and $w^-$ such that
\vspace{1mm}
\begin{enumerate}[(i)]
\item $w^{\pm}(1)=1$ and $|w^{\pm}(n)|\leq 1\,\text{ for all }\,n\in\N;$
\vspace{1mm}
\item $\text{supp}(w^{\pm})\subseteq \{d\in\N:d\mid \prod_{p\leq z}p\text{ and } d\leq z^u\};$
\vspace{1mm}
\item $W^-(n)\vcentcolon=(\ind\ast w^-)(n)\leq \ind_{P^-(n)>z}\leq (\ind\ast w^+)(n)=\vcentcolon W^+(n)\,\text{ for all }\,n\in\N;$
\item $\displaystyle{|W^{\pm}(n)-\ind_{P^-(n)>z}|\ll\tau(n)^2\sum_{r\geq u/2}\ind_{P^-(n)>z_r}2^{-r}}\,
\text{ for all }\,n\in\N$.
\end{enumerate}
\end{lemma}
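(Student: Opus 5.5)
The plan is to use the Brun ($\beta$-) sieve, in the form of the fundamental lemma with the combinatorial structure made explicit, following Friedlander--Iwaniec (\emph{Opera de Cribro}, Chapter 6), and to read off each of (i)--(iv). Fix a level $\beta$-sieve with $\beta$ a large absolute constant (even $\beta=u$-dependent choices are allowed, but we will use the truncation structure dictated by $z_r$). Define $w^+$ as $\mu(d)$ restricted to $d\mid P(z)$ such that, writing $d=p_1p_2\cdots p_m$ with $p_1>p_2>\cdots>p_m$, every odd-indexed initial segment satisfies $p_1\cdots p_{\ell-1}p_\ell^{\beta+1}<z^u$ (with the usual upper/lower parity conventions); $w^-$ is defined analogously with even indices. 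Then (i) is immediate since $w^\pm=\mu\cdot\ind_{\mathcal D^\pm}$ and $1\in\mathcal D^\pm$, and (ii) follows since the truncation conditions force $d\leq z^u$ (indeed $p_1\cdots p_m<z^u$ by the last admissible index).

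For (iii), we use the Buchstab-type identity: for $n\in\N$, $\sum_{d\mid (n,P(z))}\mu(d)\ind_{\mathcal D^+}(d)-\ind_{P^-(n)>z}$ equals $\sum \mu(d)\ind_{P^-(n/\!\cdot)}\dots$, more precisely
\[
W^\pm(n)-\ind_{P^-(n)>z}=\pm\sum_{\substack{d\mid P(z),\ d=p_1\cdots p_\ell\\ \ell\ \text{odd/even},\ d\notin\mathcal D^\pm,\ d/p_\ell\in\mathcal D^\pm}}\ind_{d\mid n}\,\ind_{(n/d,\,P(p_\ell))=1},
\]
which is a sum of nonnegative terms with the right sign; this is the standard telescoping obtained by iterating $\ind_{(n,P(y))=1}=1-\sum_{p<y,\,p\mid n}\ind_{(n/p,P(p))=1}$ and stopping exactly when the truncation condition first fails. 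This gives the two inequalities.

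For (iv), we bound the right side of the identity. A failing chain $p_1>\cdots>p_\ell$ with $d\mid n$ and $d/p_\ell$ admissible but $p_1\cdots p_{\ell-1}p_\ell^{\beta+1}\geq z^u$ while $p_1\cdots p_\ell\le z^{u}$-ish forces, by the standard argument, $p_\ell$ to be small: inductively, admissibility gives $p_1\cdots p_j\leq z^{u-\dots}$ and hence $p_j\leq z^{((u-2)/u)^{j/2}\cdot(\text{const})}$, i.e. the primes decay geometrically; choosing $\beta$ so that the exponent ratio is $(u-2)/u$ matches the definition of $z_r$. So a term with chain length $\ell$ forces $\ell\gg u$ (since $z^{u}$ can only be reached after roughly $u/2$ steps when each $p_j\leq z$) and $P^-(n)\geq$ the smallest relevant prime, while $p_\ell\le z_r$ for $r\asymp \ell$, and the condition $(n/d,P(p_\ell))=1$ plus $p_\ell\mid n$ gives $P^-(n)=p_\ell>z_{r+1}$-type lower bounds; grouping terms by $r$ and counting divisors $d\mid n$ by $\tau(n)$ (twice, for the chain and its truncation) yields $\tau(n)^2\sum_{r\ge u/2}\ind_{P^-(n)>z_r}2^{-r}$, where the $2^{-r}$ is extracted from $\tau(n)\geq 2^{\omega}$ with $\omega\geq r$ distinct prime factors forced by a chain of length $r$: indeed $\ind_{\ell\text{-chain}\mid n}\le 2^{-\ell}\tau(n)$.

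The main obstacle is (iv): pinning down the precise truncation so that a first failure at step $\ell$ simultaneously forces $\ell\geq u/2$ and $P^-(n)>z_r$ with the exact exponent $((u-2)/u)^r$. I would first try the simple rule $p_1\cdots p_{\ell}\, p_\ell<z^u$ (essentially $\beta=1$ with parity) and check that failure at $\ell$ implies $p_\ell\ge z^{u-\cdots}$ lower bounds consistent with $z_r$; if the constants do not match exactly, absorb the mismatch by shifting $r$, which only costs constant factors in $2^{-r}$ and is allowed by the $\ll$.
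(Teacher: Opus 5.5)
Your overall route ($\beta$-sieve weights, the Buchstab telescoping identity for (iii), and counting first-failure chains $d=p_1\cdots p_\ell\mid n$ with $P^-(n)=p_\ell$ for (iv)) is the one the paper intends; it simply quotes the proof of \cite[Lemma~3.2]{MM23} for $\beta$-sieve weights. Parts (i)--(iii) are fine as you describe them. The gap is in (iv), and it lies in the choice of $\beta$. With $\beta$ a fixed absolute constant, and a fortiori with $\beta=1$, (iv) fails uniformly in $u$, so no shift of $r$ can rescue it. Write $y_m:=z^u/(p_1\cdots p_m)$. Take about $u-\beta-2$ primes just below $z$, then continue with primes satisfying $\log p_m\approx c\log y_{m-1}$, where $c\in(1/(\beta+2),1/(\beta+1))$. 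Every odd index before the last is admissible, $y_m$ shrinks only by the factor $1-c$ per step, and the first failure can be placed at an odd $\ell=u+O(\beta K)$ with $p_\ell\approx z^{e^{-K}}$. For $n=p_1\cdots p_\ell$ you get $W^+(n)\geq1$. The right side of (iv), however, is $\asymp4^{\ell}2^{-r_0}$ with $r_0\approx uK/2$, the least admissible $r$ with $z_r<p_\ell$. For $K=10$ this is $2^{-3u+O(\beta)}\to0$. The mismatch is multiplicative in $r$: the chain length is about $\beta K$, whereas $uK/2$ is needed. It is not an additive shift. With $\beta=1$ it is worse still: there is no decay at all, the failing prime can be $2$ after about $u$ steps, and the right side tends to $0$ as $z\to\infty$ even for fixed $u$.

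Two things are missing. First, $\beta$ must grow linearly in $u$; the numerology $r\geq u/2$ with ratio $(u-2)/u$ is that of $\beta\approx u/2$. Your remark about matching the ratio $(u-2)/u$ points this way, but it contradicts your fixed-$\beta$ plan. Second, the argument needs a \emph{lower} bound for $p_\ell=P^-(n)$ from the failure inequality. Your claims that $p_\ell$ is forced to be small, or $p_\ell\leq z_r$ with $r\asymp\ell$, are backwards; chains of primes near $z$ refute them. Concretely, since every $p_i<z$, failure at step $\ell$ needs $\ell+\beta>u$, so $\ell>u-\beta$. Admissibility at the earlier indices of the same parity gives $p_m^\beta<y_m$, hence $\log y_{m+2}>(1-2/\beta)\log y_m$. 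The failure $p_\ell^{\beta+1}\geq y_{\ell-1}$ then yields
\[
\log p_\ell\geq\frac{(u-1)(1-1/\beta)}{\beta+1}\Big(1-\frac2\beta\Big)^{(\ell-3)/2}\log z .
\]
With, e.g., $\beta=u/2+1$ one has $1-2/\beta\geq((u-2)/u)^2$, so for $u$ not too small this gives $\ell\geq u/2-1$ and $P^-(n)=p_\ell>z_{\ell-3}$. Only then does your last step (at most $\tau(n)$ chains, each with $1\leq2^{-\ell}\tau(n)$) produce the bound $\tau(n)^2\sum_{r\geq u/2}2^{-r}\ind_{P^-(n)>z_r}$.
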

\begin{proof}
We follow the proof of \cite[Lemma~3.2\,(i)]{MM23}, taking $w$ to be
an upper bound beta-sieve weight $w^+$. This gives (i), (ii) and (iv),
while the upper bound in (iii) is a defining property of these weights.
The same proof, with the respective lower bound beta-sieve weight
$w^-$, gives (i), (ii) and (iv) for $w^-$ and the lower bound in (iii).
\end{proof}

\begin{lemma}[Average sieve error]\label{lem:meanerror}
Suppose that $z\geq2$, $u\geq100$, and $x\geq z^u$. Let also $f_1$ and $f_2$ be two arithmetic functions such that $|f_1(n)|,|f_2(n)|\leq \tau(n)$ for all $n\in\N$. Then, with the notation of
Lemma~\ref{lem:sieveweights}, for any signs $\sigma,\sigma'\in\{+,-\}$, we have that
\begin{gather*}
\sum_{k\ell\leq x}|f_1(k)f_2(\ell)|
\big|W^{\sigma}(k)W^{\sigma'}(\ell)-\ind_{P^-(k)>z}\ind_{P^-(\ell)>z}\big|
\ll\frac{x}{\log x}
\left(\frac{\log x}{\log z}\right)^{16}\!\!e^{-u/15}.
\end{gather*}
\end{lemma}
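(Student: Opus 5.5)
Write $E(n):=|W^{\sigma}(n)-\ind_{P^-(n)>z}|$. The plan is to expand the product difference with the elementary identity
\[
W^{\sigma}(k)W^{\sigma'}(\ell)-\ind_{P^-(k)>z}\ind_{P^-(\ell)>z}
=\big(W^{\sigma}(k)-\ind_{P^-(k)>z}\big)W^{\sigma'}(\ell)+\ind_{P^-(k)>z}\big(W^{\sigma'}(\ell)-\ind_{P^-(\ell)>z}\big).
\]
Then I bound each resulting piece using Lemma~\ref{lem:sieveweights}\,(iv). For the first term, write $W^{\sigma'}(\ell)=\ind_{P^-(\ell)>z}+(W^{\sigma'}(\ell)-\ind_{P^-(\ell)>z})$. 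This gives $|W^{\sigma'}(\ell)|\leq 1+E(\ell)$, and since $E(\ell)\ll\tau(\ell)^2$ we get $|W^{\sigma'}(\ell)|\ll\tau(\ell)^2$. (One can also use $|W^{\pm}(\ell)|\leq\tau(\ell)$ directly from (i).) So the whole sum is
\[
\ll\sum_{k\ell\leq x}\tau(k)^3\tau(\ell)^3 E(k)+\sum_{k\ell\leq x}\tau(k)\tau(\ell)^3E(\ell).
\]
By symmetry it suffices to bound $\sum_{k\ell\leq x}\tau(k)^3\tau(\ell)^3E(k)$.

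Next insert (iv): $E(k)\ll\tau(k)^2\sum_{r\geq u/2}2^{-r}\ind_{P^-(k)>z_r}$. Everything then reduces to estimating, for each $r\geq u/2$,
\[
\Sigma_r:=\sum_{\substack{k\ell\leq x}}\tau(k)^5\ind_{P^-(k)>z_r}\,\tau(\ell)^3.
\]
Here I would use the support of $W^{\pm}$, which I should exploit more carefully. If $P^-(k)>z$, then $W^{\pm}(k)=1=\ind_{P^-(k)>z}$, since only $d=1$ divides both $k$ and $P(z)$. So $E(k)$ is supported on $P^-(k)\leq z$. This alone gives no saving on $\ell$, and the factor $\tau(\ell)^3$ summed freely gives $x(\log x)^{7}$, which is too large compared with the target $x(\log x)^{15}(\log z)^{-16}$ when $z$ is small.

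On reflection, the intended reading is presumably that $f_1,f_2$ are the relevant weights. The sum over $\ell$ with $W^{\sigma'}(\ell)$ should be controlled by $\ell$ being essentially $z$-rough. I will therefore write the first term as $E(k)\,(\ind_{P^-(\ell)>z}+E(\ell))$, so that every surviving term carries either a roughness indicator or an $E$ factor on each variable. Each $E$ factor contributes $2^{-r}\ind_{P^-(\cdot)>z_r}$. The resulting sums have the form
\[
\sum_{r,r'}2^{-r-r'}\sum_{k\ell\leq x}\tau(k)^5\tau(\ell)^5\ind_{P^-(k)>z_r}\ind_{P^-(\ell)>z_{r'}},
\]
with $r\geq u/2$ and $r'\geq0$ (taking $z_0=z$). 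For the roughness sums I use the standard upper sieve/Shiu-type bound
\[
\sum_{n\leq y}\tau(n)^5\ind_{P^-(n)>w}\ll \frac{y}{\log y}\Big(\frac{\log y}{\log w}\Big)^{32}.
\]
A hyperbola-method convolution, together with $\log z_r=((u-2)/u)^r\log z\geq e^{-3r/u}\log z$, then gives
\[
\Sigma_{r,r'}\ll\frac{x}{\log x}\Big(\frac{\log x}{\log z}\Big)^{C}e^{Cr/u}.
\]
The exponent here must be arranged to be $16$ in $\log x/\log z$. To achieve this, I track the dimension honestly: using $\tau^2$-weights where (iv) and the hypothesis give $\tau(n)^3$, with Cauchy–Schwarz if needed.

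The main obstacle is the summation over $r$. The factor $e^{C r/u}$ from the loss in $\log z_r$ competes with $2^{-r}$. Since $r\geq u/2$ and $C/u$ is small (as $u\geq100$), the series $\sum_{r\geq u/2}2^{-r}e^{Cr/u}\ll 2^{-u/2}(1+o(1))$ converges, and $2^{-u/2}\leq e^{-u/15}$. The second delicate point is making the power of $\log x/\log z$ come out as exactly $16$. I would try to obtain it by bounding $\tau(k)^3\tau(\ell)^3\leq\tau(k\ell)^3$ and estimating $\sum_{n\leq x}\tau(n)^{4}\ind_{P^-(n)>z_r}$ in a single variable. The $k\ell$ convolution is then absorbed into one extra divisor factor. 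This requires checking that the dimension $2^{4}=16$ appears, which is exactly the power claimed.
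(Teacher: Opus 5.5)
Your final decomposition into $E^\sigma(k)\ind_{P^-(\ell)>z}+\ind_{P^-(k)>z}E^{\sigma'}(\ell)+E^\sigma(k)E^{\sigma'}(\ell)$, where $E^\sigma:=W^\sigma-\ind_{P^-(\cdot)>z}$, is exactly the paper's route. So is what follows it: Lemma~\ref{lem:sieveweights}\,(iv), a Shiu-type bound, and a geometric sum over $r$. However, the step that is supposed to deliver the exponent $16$ fails as written, for two reasons.

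\emph{The weights are miscounted.} A variable carrying an $E$-factor has weight $|f_i|\cdot\tau^2\leq\tau^3$, not $\tau^5$. The extra $\tau^2$ seems to be carried over from your abandoned first estimate. With $\tau^5$, your single-variable bound has exponent $32$ per variable, so you would end with $(\log x/\log z)^{64}$.

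\emph{The proposed repair rests on a false inequality.} Since $\tau$ is submultiplicative, $\tau(k\ell)\leq\tau(k)\tau(\ell)$, the claim $\tau(k)^3\tau(\ell)^3\leq\tau(k\ell)^3$ fails: take $k=\ell=2$, which gives $64>27$. The best pointwise collapse is $\tau(k)^3\tau(\ell)^3\leq\tau(k\ell)^6$. Combined with the $\tau(n)$ factorizations, this leads to $\sum_{n\leq x}\tau(n)^7\ind_{P^-(n)>w}$ and exponent $128$. Cauchy--Schwarz does not rescue this.

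\emph{The fix is not to collapse pointwise at all.} With $z_0=z$, the convolution $h_{r,s}:=(\tau^3\ind_{P^-(\cdot)>z_r})*(\tau^3\ind_{P^-(\cdot)>z_s})$ is itself non-negative and multiplicative. It satisfies $h_{r,s}(p)=8(\ind_{p>z_r}+\ind_{p>z_s})$ and $h_{r,s}\leq\tau^7$. Shiu's theorem applied directly to $h_{r,s}$ therefore gives
$\sum_{k\ell\leq x}\tau(k)^3\tau(\ell)^3\ind_{P^-(k)>z_r}\ind_{P^-(\ell)>z_s}\ll\frac{x}{\log x}\big(\frac{\log x}{\log z_r}\big)^8\big(\frac{\log x}{\log z_s}\big)^8$.
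Only the prime values of $h_{r,s}$ enter the exponent, which is why your heuristic count ``$16$'' is numerically right even though its justification is not. Your own hyperbola argument also yields this bound, once you use the correct single-variable exponent $8$ coming from $\tau^3$.

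Then $\log z_r=((u-2)/u)^r\log z$ turns the $r$-sum into $B=\sum_{r\geq u/2}\{\tfrac12(u/(u-2))^8\}^r$. Its ratio is at most $0.59$ for $u\geq100$, so $B\ll e^{-u/15}$. The two linear terms contribute $B$ and the quadratic term contributes $B^2$, each times $\frac{x}{\log x}(\log x/\log z)^{16}$. This is precisely the paper's proof.
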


\begin{proof}
Put $I_z(n)=\ind_{P^-(n)>z}$ and $E^\sigma=W^\sigma-I_z$.
By Lemma~\ref{lem:sieveweights}\,(iv),
$|E^\sigma(n)|\ll\tau(n)^2\sum_{r\geq u/2}2^{-r}I_{z_r}(n)$.
Expand the difference in the lemma as
$E^\sigma(k)I_z(\ell)+I_z(k)E^{\sigma'}(\ell)+E^\sigma(k)E^{\sigma'}(\ell)$.
Writing $z_0=z$, Shiu's theorem \cite[Theorem~1]{Sh80}, applied to
the convolution of $I_{z_r}\tau^3$ and $I_{z_s}\tau^3$, gives
\[
\sum_{k\ell\leq x}I_{z_r}(k)\tau(k)^3 I_{z_s}(\ell)\tau(\ell)^3
\ll\frac{x}{\log x}
\left(\frac{\log x}{\log z_r}\right)^8
\left(\frac{\log x}{\log z_s}\right)^8.
\]
The convolution is non-negative and multiplicative, with value
$8(\ind_{p>z_r}+\ind_{p>z_s})$ at a prime $p$.
Since $\log z_r=\log z\cdot((u-2)/u)^r$, put
$B=\sum_{r\geq u/2}\{\tfrac12(u/(u-2))^8\}^r\ll e^{-u/15}$ for $u\geq100$.
The two linear errors contribute $B$, and the quadratic error contributes
$B^2$, times $x(\log x/\log z)^{16}/\log x$. This proves the lemma.
\end{proof}

\begin{lemma}[Reciprocal fractional parts]\label{lem:reciprocal}
If $\alpha>0$ is an irrational real number and $a,q$ are coprime positive integers such that $|\alpha-a/q|\leq q^{-2}$, then, for $M,X\geq1$,
\begin{equation*}
\sum_{m\leq M}\min\left\{\frac Xm,\frac1{2\|m\alpha\|}\right\}
\ll(M+q+X/q)\log(2qMX).
\end{equation*}
\end{lemma}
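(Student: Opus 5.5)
This is the classical Vinogradov-type bound (compare Vaughan's lemma in \cite{MR472731}), and I would prove it by splitting the range of $m$ into blocks of length $q$ and comparing the residues $am/q$ with $m\alpha$. Write $m = m_0 + jq$ with $1\le m_0\le q$ and $0\le j\le M/q$. The sum is then split into at most $M/q+1$ blocks $B_j=\{jq<m\le (j+1)q\}\cap[1,M]$, and within each block I use the term $X/m$ or the term $1/(2\|m\alpha\|)$, whichever is more convenient.

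\emph{Step 1: spacing inside a block.} Put $\theta=\alpha-a/q$, so $|\theta|\le q^{-2}$. For $m=jq+r$ with $1\le r\le q$,
\[
m\alpha = ja + \frac{ar}{q} + m\theta .
\]
Across the block, $m\theta$ varies by at most $q|\theta|\le 1/q$, so $m\theta = c_j + O(1/q)$ with $c_j=jq\theta$ fixed. Since $(a,q)=1$, the residues $ar \bmod q$ for $r=1,\dots,q$ run over all classes. Hence the points $m\alpha \bmod 1$ for $m\in B_j$ are, up to a uniform shift $c_j$, within $O(1/q)$ of the points $b/q$, $b=0,\dots,q-1$, each hit once. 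It follows that for each $k\ge 1$, at most $O(1)$ values $m\in B_j$ satisfy $\|m\alpha\|\in[k/q,(k+1)/q)$. In addition, $O(1)$ values of $m$ have $\|m\alpha\|<C/q$ for a suitable absolute constant $C$.

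\emph{Step 2: bounding a block.} For the $O(1)$ bad $m$ in $B_j$, I bound the summand by $X/m$. This gives $\ll X/(jq)$ for $j\ge1$. For $j=0$ I bound each such term by $X/m$ or by the trivial estimate, discussed below. For the remaining $m$, I bound the summand by $1/(2\|m\alpha\|)$, which gives
\[
\sum_{1\le k\le q} \frac{q}{k} \ll q\log q .
\]
So block $j\ge1$ contributes $\ll X/(jq)+q\log q$. Summing over $j\le M/q$ gives
\[
\frac{X}{q}\log(2M) + (M+q)\log q ,
\]
which is within the claimed bound.

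\emph{Step 3: the block $j=0$ (the main obstacle).} Here $c_0=0$ is not available in general, because $m\theta$ can be as large as $1/q$, and the bad $m$ have small $m$, so $X/m$ could be as large as $X$. I would handle this by using that, for $m\le q$, $m\alpha = am/q + O(1/q)$ with $am/q\notin\Z$ when $m<q$, giving the spacing directly. More carefully, if $\|am/q\|\ge 2/q$ then $\|m\alpha\|\ge \|am/q\|/2$, and these terms contribute $O(q\log q)$ as in Step 2. The only remaining terms are those with $\|am/q\|\le 1/q$, i.e. $am\equiv 0,\pm1 \pmod q$. There are $O(1)$ such $m\le q$, and one is $m=q$ itself, which gives $X/q$. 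For the others, $am\equiv\pm1$ forces $m\ge1$ to be a fixed pair of residues, namely $m=\bar a$ or $q-\bar a$. For these I would use the estimate $\|m\alpha\|\ge 1/q - m/q^2$; the case where this is degenerate ($m$ close to $q$) is handled by the $X/m\le X/(q/2)$ bound, and otherwise one gets $1/\|m\alpha\|\ll q$. Either way the contribution is $\ll q+X/q$. Collecting all pieces yields $(M+q+X/q)\log(2qMX)$.
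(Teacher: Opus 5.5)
Your proof is correct and is essentially the classical argument behind the reference the paper cites without proof (Iwaniec--Kowalski, Section~13.5). You split into blocks of length $q$, in which the points $m\alpha$ are $O(1/q)$-perturbations of a shifted copy of $\{b/q\}$, and you treat the first block separately via $\|m\alpha\|\geq\|am/q\|-m/q^2$. Isolating $m\in\{q,\bar a,q-\bar a\}$ and splitting at $m=q/2$ correctly prevents a loss of order $X$ there.
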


\begin{proof}
See \cite[Section~13.5]{IK04}.
\end{proof}

\begin{lemma}[Hyperbola estimates]\label{lem:hyperbola}
Let $\alpha>0$ be irrational and take $x\geq2$. Let $a$ and $q$ be coprime positive integers such that $|\alpha-a/q|\leq q^{-2}$. Let also
$H,Y\in[1,x]$ and assume that $b\in\Z\setminus\{0\}$ with $|b|Y\leq x$. Suppose that $\chi$ is a Dirichlet character modulo $D\leq x$, and for $j\in\{2,3\}$, put
\[
T_{j,h}:=\sum_{n_1\cdots n_j\leq Y}^{*}
\chi(n_1)e(\alpha bhn_1\cdots n_j).
\]
Here the star may impose interval restrictions whose sections in
each variable are intervals. Then, for every $\eps>0$,
\begin{equation*}
\sum_{1\leq h\leq H}\frac{|T_{j,h}|}{h}
\ll_{\eps,j} D^2x^\eps
\left(|b|Y^{1-1/j}+q+\frac{|b|Y}{q}\right).
\end{equation*}

For $j=2$, the same bound holds if we have an additional factor of
$\log(cn_2)$ in the sum defining $T_{j,h}$, provided $1\leq c\leq x$ and
$cn_2\leq x^2$ throughout the sum.
\end{lemma}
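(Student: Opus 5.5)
The plan is to dispose of the character by splitting $n_1$ into residue classes modulo $D$, which reduces both sums to the classical hyperbola-type exponential sums. The remaining work is a divisor-switching argument for $j=2$, an induction to $j=3$, and a partial summation for the logarithmic weight.

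\emph{Removing the character.} Write $n_1=Dm+r$ with $1\leq r\leq D$. Then $\chi(n_1)=\chi(r)$ is constant on each class, and $e(\alpha bh n_1\cdots n_j)=e(\alpha bhr n_2\cdots n_j)\,e(\alpha bhD m n_2\cdots n_j)$. For fixed $r$ and fixed $n_2,\dots,n_j$, the sum over $m$ runs over an interval of length $\leq Y/(Dn_2\cdots n_j)+1$. The star conditions have interval sections, so this remains true after splitting. Each $m$-sum is a geometric sum with frequency $\alpha bhDn_2\cdots n_j$. Summing trivially over the $D$ residues $r$ costs a factor $D$.

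\emph{The case $j=2$.} For fixed $r$ the $m$-sum is
\[
\ll\min\Bigl\{\frac{Y}{Dn_2}+1,\ \frac{1}{2\|\alpha bhDn_2\|}\Bigr\}.
\]
We then sum over $n_2$ and $h$. The summand is symmetric under $n_1\leftrightarrow n_2$, so we may split the range at $n_2\leq\sqrt Y$ versus $n_1\leq\sqrt Y$. In the second range we apply the same decomposition with the roles of the variables reversed, keeping $\chi(n_1)$ fixed as a bounded coefficient. The sum over $n_2$ is then a geometric sum of length $\leq Y/n_1$ with frequency $\alpha bhn_1$, so no residue split is needed there.

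Next set $m'=bhDn_2$ (or $m'=bhn_1$ in the second range). The number of representations of a given $m'$ is bounded by $\tau(m')^{2}\ll x^{\eps}$. The weight $1/h$ is dominated by $1$. We then apply Lemma~\ref{lem:reciprocal} with $M\asymp|b|HD\sqrt Y$ and $X\asymp |b|HY$. In the first range the sum over $n_2\leq\sqrt Y$ contributes the $M$-term, which gives the $|b|Y^{1/2}$ main term.

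The factor $H$ is the main obstacle. To avoid it we dyadically decompose $h\sim H_0$ and use the $1/h\asymp 1/H_0$ weight: the total over $h\sim H_0$ is bounded by $(|b|H_0D\sqrt Y+q+|b|H_0Y/q)x^{\eps}$, and dividing by $H_0$ gives $\ll (|b|D\sqrt Y+q+|b|Y/q)x^\eps$ per dyadic block. There are $O(\log x)$ blocks. The extra factors of $D$ from the residue split and from $M$ are absorbed in $D^2$.

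\emph{The case $j=3$.} We order the variables so that the largest one, say $n_3$, is the one summed geometrically; it satisfies $n_3\geq Y^{1/3}$. The other two variables then have product at most $Y^{2/3}$. Taking $M\asymp |b|hD\,Y^{2/3}$ gives the bound $|b|Y^{1-1/3}$. If the character sits on the variable being summed, we do the residue split modulo $D$ as above; otherwise the character is just a bounded coefficient.

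\emph{The logarithmic factor.} For the extra weight $\log(cn_2)$ when $j=2$, we use partial summation in $n_2$. The weight is monotone, and $\log(cn_2)\leq 2\log x$ by the hypothesis $cn_2\leq x^2$. If $n_2$ is an outer variable, the weight is simply a bounded coefficient after extracting $\ll\log x$. If $n_2$ is the geometrically summed variable, Abel summation costs $O(\log x)$ times the maximum over subintervals, and this is covered because the estimates are uniform in intervals.

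All logarithmic and divisor losses are absorbed into $x^\eps$.
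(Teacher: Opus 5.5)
Your proposal is correct and follows essentially the same route as the paper. You split by which variable is largest, make $\chi$ constant via residue classes modulo $D$, merge $h$ with the smaller variables into one variable of divisor-bounded multiplicity, and decompose $h$ dyadically so that the weight $1/h$ cancels the factor $J$ in $M$ and $X$. You then apply Lemma~\ref{lem:reciprocal} and handle $\log(cn_2)$ by partial summation. The only difference is that the paper splits modulo $D$ in every case for uniformity, whereas you do so only when $\chi$ sits on the summed variable; this is harmless.

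One step needs adding: the paper's preliminary reduction to $q\leq x$ via the trivial bound $|T_{j,h}|\ll_j Y(\log 2Y)^{j-1}$. Without it, your claim that all logarithmic losses are absorbed into $x^{\eps}$ fails for the factor $\log(2qMX)$ from Lemma~\ref{lem:reciprocal} when $q$ is enormous compared with $x$.
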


\begin{proof}
If $q>x$, the result follows from the trivial bound
$|T_{j,h}|\ll_jY(\log(2Y))^{j-1}$, so we may assume that $q\leq x$.
Partition the sum according to which of the variables $n_1,\ldots,n_j$ is the largest, and then, in each of the resulting sums, fix the remaining variables so that we are first summing over the largest one. Next, group the smaller variables by their product $r$. Each such product $r$ satisfies the bound $r\leq Y^{1-1/j}$ and has at most $\tau_{j-1}(r)$ representations, where $\tau_{j-1}$ is the $(j-1)$-fold divisor function. Split the largest variable into residue classes modulo $D$, making $\chi(n_1)$ constant when the largest variable is $n_1$. Up to a factor of modulus $1$, the inner sums have the form $\sum_{n\in I}e(\alpha bDhrn)$, where $I$ is an interval of $\R$. Hence, the geometric-sum bound and $\tau_{j-1}(r)\ll_{\eps,j}r^{\eps/4}$ give
\begin{align*}
|T_{j,h}|\!\ll_j\!D\!\!\sum_{r\leq Y^{1-1/j}}\!\!\tau_{j-1}(r)\min\left\{\frac Yr,\frac1{2\|\alpha bDhr\|}\right\}\ll_{\eps,j}\!Dx^{\eps/4}\!\!\sum_{r\leq Y^{1-1/j}}\!\!\min\left\{\frac Yr,\frac1{2\|\alpha bDhr\|}\right\}.
\end{align*}

Now, split the sum of interest $\sum_{1\leq h\leq H}|T_{j,h}|/h$ into $O(\log H)$ dyadic ranges $J<h\leq2J$. Making use of the established bound on $T_{j,h}$ in each of these ranges, we infer that
\begin{align*}
\sum_{h\sim J}\frac{|T_{j,h}|}{h}&\ll_{\eps,j}\frac{Dx^{\eps/4}}J\sum_{\substack{h\sim J\\r\leq Y^{1-1/j}}}\min\left\{\frac{2|b|DYJ}{|b|Dhr},\frac1{2\|\alpha bDhr\|}\right\}\\
&\leq\frac{Dx^{\eps/4}}J\sum_{m\leq 2|b|DJY^{1-1/j}}\tau(m)\min\left\{\frac{2|b|DYJ}m,\frac1{2\|\alpha m\|}\right\}.
\end{align*}
We apply $\tau(m)\ll_{\eps}m^{\eps/100}$ and then
Lemma~\ref{lem:reciprocal} with
$M=2|b|DJY^{1-1/j}$ and $X=2|b|DJY$.
After division by $J$, its three terms are $\ll$
$D|b|Y^{1-1/j}$, $q$, and $D|b|Y/q$, respectively.
It follows that $\sum_{h\sim J}|T_{j,h}|/h\ll_{\eps,j}D^2x^{3\eps/4}(|b|Y^{1-1/j}+q+|b|Y/q)$. Summing the $O(\log H)$ dyadic ranges, we absorb the logarithm into $x^\eps$.

For the second part of the lemma, partial summation handles the logarithmic factor; the preceding argument then applies.
\end{proof}

We shall use the following consequences of the parameter choices:
\begin{equation}\label{eq:parametererrors}
\left(\frac Vv\right)^{16}e^{-\eps V/(1500v)}
+x^{-\eps/20}\ll_\eps\calE.
\end{equation}
Indeed, $V/v\gg\sqrt{V\log\xi}$, so the first term is
absorbed by the exponential in $\calE$, after decreasing $c_\eps$.
Moreover, the quadratic class-number lower bound
$L(1,\chi)\gg D^{-1/2}$ and the standard estimate
$L(1,\chi)\ll(1-\beta_\chi)(\log D)^2$
(see \cite[Theorem~11.4]{montgomery2007multiplicative})
give $\xi\ll D^{1/2}\log D$. Since $x=D^V$, the second term
is also absorbed by the exponential in $\calE$.

\section{Proof of Proposition \texorpdfstring{\ref{prop:s1}}{3.1}:
Estimation of \texorpdfstring{$S_1$}{S1}}\label{sec:s1}

Since $\Lambda(\ell)\leq\log x$ for $\ell\leq x$ and $\lambda\geq0$, for $z<t,$ the elementary rough-number bound $\sum_{\ell\leq t, P^-(\ell)>z}1\ll t/(\log z)$ (it can readily be derived by applying Shiu's theorem \cite[Theorem~1]{Sh80} with the non-negative multiplicative function $f=\ind_{P^-(\cdot)>z}$), combined with Lemma~\ref{lem:roughlambda} and Chebyshev's bound $\psi(x)\gg x$, implies that
\begin{align*}
S_1
&\leq(\log x)\sum_{\substack{z<k<x/z\\P^-(k)>z}}
\lambda(k)\sum_{\substack{1<\ell\leq x/k\\P^-(\ell)>z}}1\ll x \cdot \frac Vv\sum_{\substack{z<k\leq x\\P^-(k)>z}}
\frac{\lambda(k)}k\notag\\
&\ll x \cdot \frac Vv\left\{\frac{V^3}{\xi}
+\exp(-c\sqrt{V\log\xi})\right\}\ll x\calE\ll \psi(x)\calE
\end{align*}
for some appropriate constant $c_{\eps}>0$. This proves Proposition~\ref{prop:s1}. 

The same argument yields the bound
\begin{equation}\label{eq:roughpairmean}
\log x\sum_{\substack{k\ell\leq x\\k,\ell>1\\P^-(k\ell)>z}}
\lambda(k)\ll_\eps \psi(x)\calE,
\end{equation}
which shall be used in the very next section.

\section{Proof of Proposition \texorpdfstring{\ref{prop:s2}}{3.2}:
Estimation of \texorpdfstring{$S_2$}{S2}}\label{sec:s2}

We may assume that $x$ is large enough in terms of $\eps$. Then, in Lemma~\ref{lem:fourier}, take
\begin{equation}\label{eq:Asmoothing}
t=\delta x^{-\eps/10},\qquad
H=\left\lceil\delta^{-1}x^{\eps/5}\right\rceil.
\end{equation}
By the non-negativity of $\lambda$, the inequality $\ind_{\|\cdot\|\leq \delta}\leq \Phi^+$, the crude bound $\Lambda(\ell)\leq \log x$ for $\ell\leq x$, and Lemma \ref{lem:sieveweights}(iii), we obtain
\[
S_2\leq(\log x)\sum_{\substack{k\ell\leq x\\k,\ell>1}}
\lambda(k)W^+(k)W^+(\ell)\Phi^+(\alpha k\ell+\beta).
\]
Put
\[
A_h:=\sum_{\substack{k\ell\leq x\\k,\ell>1}}
\lambda(k)W^+(k)W^+(\ell)e(\alpha hk\ell)\qquad (h\in\Z),
\]
and use the Fourier expansion \eqref{eq:fourierexpansion} of $\Phi^+$,
choosing $A$ sufficiently large in terms of $\eps$ so that its
pointwise error is $O_\eps(t)=O_\eps(\delta x^{-\eps/10})$. Hence,
\begin{align}\label{S2}
S_2\leq (\log x)(2\delta+O_\eps(\delta x^{-\eps/10}))A_0
+\log x\sum_{0<|h|\leq H}\widehat\Phi^+(h)e(h\beta)A_h.
\end{align}

Since $\lambda(n)\leq \tau(n)$ for all $n\in\N$, combining Lemma \ref{lem:meanerror} with \eqref{eq:roughpairmean}, we infer that
\begin{equation}\label{A0}
(\log x)A_0\ll_\eps x\Big(\calE+(V/v)^{16}e^{-\eps V/(1500v)}\Big)\!\ll_\eps x\calE
\end{equation}
for some suitable constant $c_\eps>0$. 

Inserting the bounds \eqref{bPhi} and \eqref{A0} into \eqref{S2}, it follows that
\begin{equation}\label{eq:S2Fourier}
S_2\ll_\eps x\delta\calE+
\log x\sum_{0< |h|\leq H}\bigg|\frac{A_h}{h}\bigg|.
\end{equation}

To estimate the sum involving $A_h$, we expand
$\lambda(k)=\sum_{ab=k}\chi(a)$ and the two sieve weight
convolutions in its definition. This gives
\begin{align*}
A_h=\sum_{\substack{d_1,d_2\leq z^u\\\mu(d_1)^2=1}}w^+(d_1)w^+(d_2)\sum_{\substack{ab\ell\leq x/d_2\\ab>1,\,\ell>d_2^{-1}\\d_1\mid ab}}\chi(a)e(\alpha ab\ell d_2h).
\end{align*}
Since $d_1$ is square-free, the integer $d=(a,d_1)$ is the only common positive divisor of $a$ and $d_1$ satisfying $(a/d,d_1/d)=1$. Since $d_1\mid ab$, it follows that $(d_1/d)\mid (a/d)\cdot b$, which, combined with the coprimality $(a/d,d_1/d)=1$, implies that $(d_1/d)\mid b$. Hence, by M\"obius inversion,
\begin{equation}\label{eq:square-freeidentity}
\ind_{d_1\mid ab}
=\sum_{\substack{d\mid d_1,\,d\mid a\\(d_1/d)\mid b}}
\ind_{(a/d,d_1/d)=1}=\sum_{\substack{dr\mid d_1,\,dr\mid a\\(d_1/d)\mid b}}\mu(r),
\end{equation}
where $\mu$ is the M\"obius function. We now use the identity \eqref{eq:square-freeidentity} to detect the divisibility condition $d_1\mid ab$ in the inner sums of $A_h$ above, and writing $a=drn_1,b=(d_1/d)n_2$ and $\ell=d_2n_3$, we obtain
\begin{gather*}
|A_h|\leq \sum_{\substack{d_1,d_2\leq R\\\mu^2(d_1)=1}}
\sum_{d\mid d_1}\sum_{r\mid d_1/d}
\bigg|\sum_{\substack{n_1n_2n_3\leq x/(rd_1d_2)\\
d_1rn_1n_2>1,\ d_2n_3>1}}
\chi(n_1)e(\alpha hrd_1d_2n_1n_2n_3)\bigg|.
\end{gather*}
By Lemma~\ref{lem:hyperbola} with $b=rd_1d_2$ for the sum over the positive $h$ and with $b=-rd_1d_2$ for the sum over $h<0$ (in each case $|b|\leq R^3$), we infer that
\begin{align*}
\log x\sum_{0<|h|\leq H}\frac{|A_h|}{|h|}
&\ll_\nu D^2x^\nu R
\left(x^{2/3}R+q+\frac xq\right)\sum_{d_1\leq R}\tau_3(d_1)\\
&\ll_\eps x^{2/3+\eps/4}.
\end{align*}
In the last line choose $\nu$ sufficiently small in terms of $\eps$,
and use $R=x^{\eps/100}$, $D=x^{1/V}$, and
\eqref{eq:A-diophantine-hypothesis}. Since
$x^{-\eps/20}\ll_\eps\calE$ by \eqref{eq:parametererrors},
this and \eqref{eq:S2Fourier} prove Proposition~\ref{prop:s2}.

\section{Proof of Proposition \texorpdfstring{\ref{prop:delta}}{3.3}:
Estimation of \texorpdfstring{$\Delta$}{Delta}}\label{sec:delta}

Use the same Fourier parameters as in \eqref{eq:Asmoothing}, and set
\[
M_0:=\sum_{n\leq x}\lambda'(n)\ind_{P^-(n)>z},\qquad
B_h^\pm:=\sum_{n\leq x}\lambda'(n)W^\pm(n)e(\alpha hn).
\]
Since $\lambda'(n)\leq\tau(n)\log n$, Lemma~\ref{lem:meanerror} gives
\begin{equation}\label{eq:DeltaMean}
B_0^\pm=M_0+O_\eps(x\calE).
\end{equation}
For the upper bound, use
$\ind_{P^-(n)>z}\Bohr{n}\leq W^+(n)\Phi^+(\alpha n+\beta)$.
For the lower bound, first use the non-negative minorant and then the
lower sieve weight:
\begin{equation*}
\ind_{P^-(n)>z}\Bohr{n}\geq \ind_{P^-(n)>z}\Phi^-(\alpha n+\beta)
\geq W^-(n)\Phi^-(\alpha n+\beta).
\end{equation*}
This order is important, since $W^-$ need not be non-negative.
Multiplying by $\lambda'(n)\geq0$ and summing, then using
\eqref{eq:DeltaMean} and \eqref{bPhi}, yields
\begin{equation}\label{eq:DeltaFourier}
|\Delta|\ll_\eps x\delta\calE+
\sum_{\sigma\in\{+,-\}}\sum_{1\leq h\leq H}\frac{|B_h^\sigma|}{h}.
\end{equation}
The negative frequencies give the same contribution, since the
weights are real. Moreover, $|W^\pm(n)|\leq\tau(n)$ and
$\sum_{n\leq x}\lambda'(n)|W^\pm(n)|\ll x(\log x)^C$.
Thus the Fourier mean and tail errors are
$O(tx(\log x)^C)\ll x\delta\calE$ by
\eqref{eq:parametererrors}, also for the possibly signed lower
sieve weight.

Expanding $\lambda'=\chi*\log$ and using
\eqref{eq:square-freeidentity}, we have
\begin{align*}
B_h^\sigma
=&\sum_{d\leq R}w^\sigma(d)
\sum_{e\mid d}\sum_{r\mid d/e}\mu(r)\chi(er)\\
&\sum_{n_1n_2\leq x/(dr)}
\chi(n_1)\log((d/e)n_2)e(\alpha hdrn_1n_2).
\end{align*}
There are $O(R(\log R)^2)$ outer terms, with $dr\leq R^2$.
The two-variable case of Lemma~\ref{lem:hyperbola} therefore gives
\begin{align*}
\sum_{1\leq h\leq H}\frac{|B_h^\sigma|}{h}
&\ll_\nu D^2x^\nu R
\left(x^{1/2}R+q+\frac xq\right)\\
&\ll_\eps x^{2/3+\eps/4}.
\end{align*}
Combining this with \eqref{eq:DeltaFourier} and
\eqref{eq:parametererrors} proves
Proposition~\ref{prop:delta}.

\section{Proof of Proposition \texorpdfstring{\ref{prop:typei}}{4.1}:
Type I information}\label{sec:typei}

We use the Type I argument underlying \cite[Lemma~3]{MR686496},
separating the complementary variable into residue classes modulo $D$.
Fix $s>0$ sufficiently small that
\begin{equation}\label{eq:typeimargin}
4s<\min\{\gamma-\eta,\kappa-\eta\}.
\end{equation}
In Lemma~\ref{lem:fourier}, take
\[
t=\delta y^{-s},\qquad H=\lceil\delta^{-1}y^{2s}\rceil.
\]
Since $Ht\asymp y^s$, choosing the Fourier-tail exponent sufficiently
large makes the error in \eqref{eq:fourierexpansion} $O(t)$.

Let $\varpi(m)=1$ or $\chi(m)$. For each $d\leq M$ and residue class
$r\pmod D$, put
\[
J_{d,r}:=\{m\in J_d:m\equiv r\pmod D\},\qquad
S_{d,r}:=\sum_{m\in J_{d,r}}(\Bohr{dm}-2\delta).
\]
Summing the inequalities $\Phi^-\leq\ind_{\|\cdot\|\leq\delta}
\leq\Phi^+$ over $J_{d,r}$, \eqref{eq:fourierexpansion} gives
\begin{equation*}
|S_{d,r}|\ll t|J_{d,r}|+
\delta\sum_{0<|h|\leq H}
\left|\sum_{m\in J_{d,r}}e(\alpha hdm)\right|.
\end{equation*}
Here $e(h\beta)$ has absolute value $1$. Since $\varpi(m)$ and
$\ind_{(m,D)=1}$ are constant on $J_{d,r}$, the two sums in the
proposition are bounded in absolute value by
\begin{equation}\label{eq:typeimaster}
\ll t y(\log y)^C
+\delta\sum_{0<|h|\leq H}\sum_{d\leq M}|c_d|
\sum_{r\!\Mod{\!D}}
\left|\sum_{m\in J_{d,r}}e(\alpha hdm)\right|.
\end{equation}
The first term follows from
$\sum_{d\leq M}|c_d|/d\ll(\log y)^C$.

For $|h|\sim K$ and $d\sim N$, the geometric-sum bound, followed by
Lemma~\ref{lem:reciprocal} with $\ell=D|h|d$, gives, for every $\nu>0$,
\begin{align*}
&\sum_{|h|\sim K}\sum_{d\sim N}|c_d|
\sum_{r\!\Mod{\!D}}
\left|\sum_{m\in J_{d,r}}e(\alpha hdm)\right|\\
&\qquad\ll_\nu D y^\nu
\sum_{|h|\sim K}\sum_{d\sim N}
\min\left\{\frac{y}{Dd},\frac1{2\|\alpha Dhd\|}\right\}\\
&\qquad\ll_\nu D^2y^{2\nu}
\left(KN+q+\frac{yK}{q}\right).
\end{align*}
Indeed, $DM=o(y)$, so each progression has
$O(y/(Dd))$ terms; the multiplicity of $\ell$ is divisor bounded.
Summing the dyadic blocks in \eqref{eq:typeimaster}, we obtain
\begin{equation*}
\ll_\nu D^2y^{3\nu}
\left\{\delta\left(HM+q+\frac{yH}{q}\right)+ty\right\}.
\end{equation*}

Since $x/\log x\leq y\leq x/2$, $D=y^{o(1)}$, and
$X=\delta y^{1-o(1)}$, the four terms in braces, divided by
$\delta y$, are at most
\[
x^{\eta-\gamma+2s+o(1)},\qquad x^{-\kappa+o(1)},
\qquad x^{\eta-\kappa+2s+o(1)},\qquad y^{-s},
\]
respectively. By \eqref{eq:typeimargin}, all are power savings;
choosing $\nu$ sufficiently small proves both assertions.

\section{Proof of Proposition \texorpdfstring{\ref{prop:sievehypothesis}}{4.2}:
Sieve hypotheses}\label{sec:sievehypothesis}

We verify the reference-sequence and local-density conditions of
\cite[Theorem~16]{Mer24}, as well as the exceptional-character bounds.
Throughout,
$D=y^{o(1)}$ and $D\gg_A(\log y)^A$ for every fixed $A>0$.

First, M\"obius inversion gives, for every interval $I$,
\[
\sum_{\substack{m\in I\\(m,D)=1}}1
=\frac{\varphi(D)}D \cdot |I|+O(\tau(D)).
\]
Consequently, for $d\leq y^{3/4}$,
\begin{equation*}
\sum_m b_{dm}
=\frac{2\delta y\varphi(D)}{Dd}\cdot \ind_{(d,D)=1}
+O(\delta\tau(D)\ind_{(d,D)=1}).
\end{equation*}
Consequently, for divisor-bounded $(c_d)$ and $M\leq y^{3/4}$,
\[
\sum_{d\leq M}c_d\sum_m b_{dm}
=X\sum_{d\leq M}c_dg(d)
+O_A(X(\log y)^{-A}).
\]
If $I=(N,N(1+\Delta)]$ with $\Delta\geq(\log y)^{-B}$,
$dN\asymp y$, and $d\leq y^{3/4}$, then the same counting gives
$\sum_{m\in I}b_{dm}\ll\Delta Xg(d)$, uniformly for fixed $B$.
Indeed, $\Delta N\gg y^{1/4}(\log y)^{-B}$, whereas
$\tau(D)=y^{o(1)}$ and $\varphi(D)/D\gg1/\log\log D$;
thus the counting error is absorbed by the main bound.
This is the short-interval bound for the reference sequence;
Proposition~\ref{prop:typei} supplies the corresponding comparison
between the two sequences required in \cite[Proposition~8]{Mer24}.

By periodicity, $\sum_{m\in I}\chi(m)=O(D)$ for every interval $I$.
Hence, for $M\leq y^{3/4}$ and arbitrary $J_d$,
\begin{equation*}
\sum_{d\leq M}c_d\sum_{m\in J_d}\chi(m)b_{dm}
\ll\delta DM(\log y)^C
\ll_AX(\log y)^{-A}.
\end{equation*}
Together with \eqref{eq:typeitwist}, this verifies
\cite[Proposition~9]{Mer24}. Taking, for example, a fixed
$0<\eps'<1/12$ covers the reference level $y^{2/3+\eps'}$ used there.

By \eqref{eq:g}, $g$ is multiplicative, $0\leq g(p)<1$,
and $g(d)\leq1/d$. For $2\leq w<z$, Mertens' theorem gives
$\prod_{w\leq p<z}(1-g(p))^{-1}
\leq (1+O(1/\log w))\log z/\log w$.
Thus the local density satisfies the condition in
\cite[Lemma~5]{Mer24}. For $w>D$,
\begin{equation}\label{eq:mertensD}
\prod_{p\leq w}(1-g(p))
=(1+o(1)) \cdot \frac{\ee^{-\gamma_*}}{\log w} \cdot \frac D{\varphi(D)}.
\end{equation}
Also, the prime number theorem gives $\sum_n b_n\Lambda(n)\sim2\delta y$:
the terms with prime base dividing $D$ have total weight
$O(\omega(D)\log y)=y^{o(1)}$. Together with \eqref{eq:X}
and \eqref{eq:mertensD}, this proves \eqref{eq:normalization}.
For each fixed $a>0$ and $w>y^a$, the contribution of $p\mid D$
to $\sum_{k\sim w}\Lambda(k)/k$ is
$O(\omega(D)\log w/w)=o(1)$. Thus the same theorem gives
\eqref{eq:referenceg}.

For the exceptional-character condition, note that
$\chi(n)g(n)=\chi(n)/n$. Periodicity and partial summation give
\begin{equation}\label{eq:chitail}
\sum_{n\leq t}\frac{\chi(n)}n=L(1,\chi)+O(D/t).
\end{equation}
The Dirichlet hyperbola method also gives
\begin{equation}\label{eq:lambdasum}
\sum_{n\leq t}\lambda(n)=L(1,\chi)t+O(D\sqrt t).
\end{equation}
Indeed, put $H=\lfloor\sqrt t\rfloor$ and
$S_\chi(v)=\sum_{n\leq v}\chi(n)=O(D)$. Splitting $ab\leq t$
according to $a\leq H$ or $b\leq H$ gives
\[
\sum_{n\leq t}\lambda(n)
=\sum_{a\leq H}\chi(a)\left\lfloor\frac ta\right\rfloor
+\sum_{b\leq H}S_\chi(t/b)-H S_\chi(H)
=t\sum_{a\leq H}\frac{\chi(a)}a+O(D\sqrt t).
\]
The tail of the truncated sum is $O(D/H)$ by partial summation,
giving \eqref{eq:lambdasum}. The quadratic class-number formula gives
$L(1,\chi)\gg D^{-1/2}$,
so the errors in
\eqref{eq:chitail} and \eqref{eq:lambdasum} are absorbed by their
main bounds for $t\geq D^9$. Partial summation of
\eqref{eq:lambdasum} gives
$\sum_{w<n\leq t}\lambda(n)/n\ll L(1,\chi)\log t$
for $t>w\geq D^9$, since $D/\sqrt w\ll L(1,\chi)$.
Since $0\leq\lambda(n)g(n)\leq\lambda(n)/n$, this proves
\eqref{eq:exceptionalaxiom}. This is stronger than the logarithmic
bound required in \cite[Proposition~10]{Mer24}.

Finally, fix $0<a<1$. If $n\sim y$, $\Lambda(n)\ne0$, and
$(n,P(y^a))>1$, then $n$ is a proper prime power. Therefore
\[
\sum_{\substack{n\sim y\\(n,P(y^a))>1}}
(a_n+b_n)\Lambda(n)
\ll y^{1/2}(\log y)^2=o(\delta y),
\]
since $1/3+\eta<1/2$ and $y=x^{1+o(1)}$.
This verifies the remaining prime-power condition and completes the
proof of Proposition~\ref{prop:sievehypothesis}.

\section*{Acknowledgements}
During the preparation of this work, S.-K. Leung was supported by the
Croucher Fellowship for Postdoctoral Research. S. Sachpazis acknowledges
support from the grants PRIMUS/25/SCI/008 and PRIMUS/25/SCI/0017 of
Charles University.

%\nocite{*}
%\raggedbottom
\printbibliography

@article {MR2525926,
    AUTHOR = {Matom\"aki, K.},
     TITLE = {The distribution of {$\alpha p$} modulo one},
   JOURNAL = {Math. Proc. Cambridge Philos. Soc.},
  FJOURNAL = {Mathematical Proceedings of the Cambridge Philosophical
              Society},
    VOLUME = {147},
      YEAR = {2009},
    NUMBER = {2},
     PAGES = {267--283},
  %
  %ISSN = {0305-0041,1469-8064},
 %  MRCLASS = {11K06 (11K31 11L07 11N36)},
%  MRNUMBER = {2525926},
%MRREVIEWER = {R.\ C.\ Baker},
 %      DOI = {10.1017/S030500410900245X},
%       URL = {https://doi.org/10.1017/S030500410900245X},
}

@book {Har07,
    AUTHOR = {Harman, G.},
     TITLE = {Prime-detecting sieves},
    SERIES = {London Mathematical Society Monographs Series},
    VOLUME = {33},
 PUBLISHER = {Princeton University Press, Princeton, NJ},
      YEAR = {2007},
     PAGES = {xvi+362},
   %   ISBN = {978-0-691-12437-7},
  % MRCLASS = {11N36 (11N25 11N35)},
 % MRNUMBER = {2331072},
%MRREVIEWER = {S.\ W.\ Graham},
}

@article {MR686496,
    AUTHOR = {Harman, G.},
     TITLE = {On the distribution of {$\alpha p$}\ modulo one},
   JOURNAL = {J. London Math. Soc. (2)},
  FJOURNAL = {Journal of the London Mathematical Society. Second Series},
    VOLUME = {27},
      YEAR = {1983},
    NUMBER = {1},
     PAGES = {9--18},
   %   ISSN = {0024-6107,1469-7750},
 %  MRCLASS = {10F40 (10H32)},
%  MRNUMBER = {686496},
%MRREVIEWER = {Matti\ Jutila},
  %     DOI = {10.1112/jlms/s2-27.1.9},
 %      URL = {https://doi.org/10.1112/jlms/s2-27.1.9},
}

@article {MR1367078,
    AUTHOR = {Harman, G.},
     TITLE = {On the distribution of {$\alpha p$} modulo one. {II}},
   JOURNAL = {Proc. London Math. Soc. (3)},
  FJOURNAL = {Proceedings of the London Mathematical Society. Third Series},
    VOLUME = {72},
      YEAR = {1996},
    NUMBER = {2},
     PAGES = {241--260},
   %   ISSN = {0024-6115,1460-244X},
  % MRCLASS = {11J71 (11N35 11N36)},
 % MRNUMBER = {1367078},
%MRREVIEWER = {D.\ R.\ Heath-Brown},
 %      DOI = {10.1112/plms/s3-72.2.241},
%       URL = {https://doi.org/10.1112/plms/s3-72.2.241},
}

@article {MR1863396,
    AUTHOR = {Heath-Brown, D. R. and Jia, C.},
     TITLE = {The distribution of {$\alpha p$} modulo one},
   JOURNAL = {Proc. London Math. Soc. (3)},
  FJOURNAL = {Proceedings of the London Mathematical Society. Third Series},
    VOLUME = {84},
      YEAR = {2002},
    NUMBER = {1},
     PAGES = {79--104},
   %   ISSN = {0024-6115,1460-244X},
  % MRCLASS = {11N36 (11J71)},
 % MRNUMBER = {1863396},
%MRREVIEWER = {R.\ C.\ Baker},
   %    DOI = {10.1112/plms/84.1.79},
  %     URL = {https://doi.org/10.1112/plms/84.1.79},
}

@article {MR1790174,
    AUTHOR = {Jia, C.},
     TITLE = {On the distribution of {$\alpha p$} modulo one. {II}},
   JOURNAL = {Sci. China Ser. A},
  FJOURNAL = {Science in China. Series A. Mathematics},
    VOLUME = {43},
      YEAR = {2000},
    NUMBER = {7},
     PAGES = {703--721},
   %   ISSN = {1006-9283,1862-2763},
  % MRCLASS = {11J71 (11K60 11N36)},
 % MRNUMBER = {1790174},
%MRREVIEWER = {John\ H.\ Loxton},
  %     DOI = {10.1007/BF02878436},
 %      URL = {https://doi.org/10.1007/BF02878436},
}

@article {MR1247382,
    AUTHOR = {Jia, C.},
     TITLE = {On the distribution of {$\alpha p$} modulo one},
   JOURNAL = {J. Number Theory},
  FJOURNAL = {Journal of Number Theory},
    VOLUME = {45},
      YEAR = {1993},
    NUMBER = {3},
     PAGES = {241--253},
   %   ISSN = {0022-314X,1096-1658},
  % MRCLASS = {11K60 (11K06 11N36)},
 % MRNUMBER = {1247382},
%MRREVIEWER = {John\ H.\ Loxton},
 %      DOI = {10.1006/jnth.1993.1075},
%       URL = {https://doi.org/10.1006/jnth.1993.1075},
}

@article {MM23,
    AUTHOR = {Matom\"aki, K. and Merikoski, J.},
     TITLE = {Siegel zeros, twin primes, {G}oldbach's conjecture, and primes
              in short intervals},
   JOURNAL = {Int. Math. Res. Not. IMRN},
  FJOURNAL = {International Mathematics Research Notices. IMRN},
      YEAR = {2023},
    NUMBER = {23},
     PAGES = {20337--20384},
   %   ISSN = {1073-7928,1687-0247},
  % MRCLASS = {11M20 (11N05)},
 % MRNUMBER = {4675073},
%MRREVIEWER = {Todd\ Molnar},
   %    DOI = {10.1093/imrn/rnad069},
  %     URL = {https://doi.org/10.1093/imrn/rnad069},
}

@book{montgomery2007multiplicative,
  title={Multiplicative number theory I: Classical theory},
  author={Montgomery, H. L. and Vaughan, R. C.},
  volume={97},
  year={2007},
  publisher={Cambridge University Press}
}

@article {MR472731,
    AUTHOR = {Vaughan, R. C.},
     TITLE = {On the distribution of {$\alpha p$} modulo {$1$}},
   JOURNAL = {Mathematika},
  FJOURNAL = {Mathematika. A Journal of Pure and Applied Mathematics},
    VOLUME = {24},
      YEAR = {1977},
    NUMBER = {2},
     PAGES = {135--141},
   %   ISSN = {0025-5793},
  % MRCLASS = {10H15 (10G05)},
 % MRNUMBER = {472731},
%MRREVIEWER = {Matti\ Jutila},
   %    DOI = {10.1112/S0025579300009025},
  %     URL = {https://doi.org/10.1112/S0025579300009025},
}

@book {MR2104806,
    AUTHOR = {Vinogradov, I. M.},
     TITLE = {The method of trigonometrical sums in the theory of numbers},
      NOTE = {Translated from the Russian, revised and annotated by K. F.
              Roth and Anne Davenport,
              Reprint of the 1954 translation},
 PUBLISHER = {Dover Publications, Inc., Mineola, NY},
      YEAR = {2004},
     PAGES = {x+180},
  %    ISBN = {0-486-43878-3},
 %  MRCLASS = {11Lxx (01A75)},
%  MRNUMBER = {2104806},
}

@article {MR0703977,
    AUTHOR = {Heath-Brown, D. R.},
     TITLE = {Prime twins and {S}iegel zeros},
   JOURNAL = {Proc. London Math. Soc. (3)},
  FJOURNAL = {Proceedings of the London Mathematical Society. Third Series},
    VOLUME = {47},
      YEAR = {1983},
    NUMBER = {2},
     PAGES = {193--224},
      ISSN = {0024-6115,1460-244X},
 %  MRCLASS = {10H08 (10H32 10J15)},
%  MRNUMBER = {703977},
%MRREVIEWER = {Matti\ Jutila},
 %      DOI = {10.1112/plms/s3-47.2.193},
 %      URL = {https://doi.org/10.1112/plms/s3-47.2.193},
}

@article {Mer24,
    AUTHOR = {Merikoski, J.},
     TITLE = {Exceptional characters and prime numbers in sparse sets},
   JOURNAL = {Algebra Number Theory},
  FJOURNAL = {Algebra \& Number Theory},
    VOLUME = {18},
      YEAR = {2024},
    NUMBER = {7},
     PAGES = {1305--1332},
  %    ISSN = {1937-0652,1944-7833},
%   MRCLASS = {11N32 (11N36)},
%  MRNUMBER = {4757307},
%MRREVIEWER = {Karin\ Halupczok},
 %      DOI = {10.2140/ant.2024.18.1305},
 %      URL = {https://doi.org/10.2140/ant.2024.18.1305},
}

@book {IK04,
    AUTHOR = {Iwaniec, H. and Kowalski, E.},
     TITLE = {Analytic number theory},
    SERIES = {American Mathematical Society Colloquium Publications},
    VOLUME = {53},
 PUBLISHER = {American Mathematical Society, Providence, RI},
      YEAR = {2004},
     PAGES = {xii+615},
  %    ISBN = {0-8218-3633-1},
%   MRCLASS = {11-02 (11Fxx 11Lxx 11Mxx 11Nxx)},
 % MRNUMBER = {2061214},
%MRREVIEWER = {K.\ Soundararajan},
%       DOI = {10.1090/coll/053},
%       URL = {https://doi.org/10.1090/coll/053},
}

@article {Sh80,
    AUTHOR = {Shiu, P.},
     TITLE = {A {B}run-{T}itchmarsh theorem for multiplicative functions},
   JOURNAL = {J. Reine Angew. Math.},
  FJOURNAL = {Journal f\"ur die Reine und Angewandte Mathematik. [Crelle's
              Journal]},
    VOLUME = {313},
      YEAR = {1980},
     PAGES = {161--170},
   %   ISSN = {0075-4102,1435-5345},
 %  MRCLASS = {10H25},
%  MRNUMBER = {552470},
%MRREVIEWER = {A.\ I.\ Vinogradov},
 %      DOI = {10.1515/crll.1980.313.161},
 %      URL = {https://doi.org/10.1515/crll.1980.313.161},
}

\end{document}